\documentclass[twoside, 11pt, leqno]{amsart}
 \usepackage{amsmath, amssymb, amsfonts, amsthm, amscd} \usepackage{mathrsfs}
 \usepackage{graphicx}
 \usepackage{cancel}
 \usepackage[margin=1.4in]{geometry}


\pagestyle{plain}
\theoremstyle{plain}
\setcounter{page}{1}
\numberwithin{equation}{section}

\hyphenation{hej-sa}

\newtheorem{theorem}{Theorem}[section]
\newtheorem{maintheorem}{Theorem}
\newtheorem{lemma}[theorem]{Lemma}

\newtheorem{proposition}[theorem]{Proposition}
\newtheorem{definition}[theorem]{Definition}
\newtheorem{remark}[theorem]{Remark}

\newtheorem{conditions}[theorem]{Conditions}

\newcommand{\fracsm}[2]{\begin{matrix}\frac{#1}{#2}\end{matrix}}

\newcommand{\beq}{\begin{equation}}
\newcommand{\eeq}{\end{equation}}

\newcommand{\Reals}{\mathbb{R}}

\newcommand{\sbullet}{\,\begin{picture}(1,1)(0,-3)\circle*{2}\end{picture}\ }

\DeclareMathOperator{\Ric}{Ric}

\DeclareMathOperator{\grad}{\nabla}

\begin{document}

\author{Niels Martin M\o{}ller}
\address{Niels Martin M\o{}ller, Fine Hall, Princeton University, NJ 08540.}
\email{moller@math.princeton.edu}
\title{Closed self-shrinking surfaces \\in $\mathbb{R}^3$ via the torus}
\keywords{Mean curvature flow, self-shrinkers, self-similarity, solitons, minimal surfaces, gluing constructions, stability theory, geodesics, spectral theory.}
\thanks{The author was supported  partially by NSF award DMS-1311795.}

\begin{abstract}
We construct many closed, embedded mean curvature self-shrinking surfaces $\Sigma_g^2\subseteq\Reals^3$ of high genus $g=2k$, $k\in \mathbb{N}$.

Each of these shrinking solitons has isometry group equal to a dihedral group on $2g$ elements, and comes from the "gluing", i.e. desingularizing the singular union, of the two known closed embedded self-shrinkers in $\Reals^3$: The round 2-sphere $\mathbb{S}^2$, and Angenent's self-shrinking 2-torus of revolution $\mathbb{T}^2$. This uses the results and methods N. Kapouleas developed for minimal surfaces in \cite{Ka97}--\cite{Ka}.

\end{abstract}

\date{September 29, 2014}
\maketitle

\section{Introduction}
Recall that a smooth surface $\Sigma^{2}\subseteq\Reals^{3}$ is a mean curvature self-shrinker if
it satisfies the corresponding nonlinear elliptic self-shrinking soliton PDE:
\beq\label{SSPDE}
H(\Sigma)=\frac{\langle X,\nu_{\Sigma}\rangle}{2},\quad X\in\Sigma^2\subseteq\Reals^{3},
\eeq
where $H$ denotes the mean curvature, $X$ the position vector and  $\nu_\Sigma$ is a unit length vector field normal to $\Sigma^2\subseteq\Reals^3$.

While important as singularity models in mean curvature flow (see e.g. \cite{Hu90}-\cite{Hu93} and \cite{CM6}-\cite{CM8}), the list of known closed, embedded surfaces satisfying Equation (\ref{SSPDE}) is short:
\begin{itemize}
\item[\sbullet] The round 2-sphere $\mathbb{S}^2\subseteq\Reals^3$,
\item[\sbullet] Angenent's (non-circular) 2-torus, in \cite{Ang}.
\end{itemize}
Apart from these, there is numerical evidence for the existence of a self-shrinking "fattened wire cube" in \cite{Ch}, and in higher dimensions there are Lagrangian examples (generalizing the Abresch-Langer curves \cite{AL}), found by Anciaux in \cite{Anc}.

One can rigorously construct closed, embedded, smooth mean curvature self-shrinkers with high genus $g$, embedded in Euclidean space $\Reals^3$:
\begin{maintheorem}\label{Thm:torusphere}
For every large enough even integer $g=2k$, $k\in\mathbb{N}$, there exists a compact, embedded, orientable, smooth surface without boundary $\Sigma^2_g\subseteq \Reals^3$, with the properties:
\begin{itemize}
\item[(i)] $\Sigma_g$ is a mean curvature self-shrinker of genus $g$.
\item[(ii)] $\Sigma_g$ is invariant under the dihedral symmetry group with $2g=4k$ elements.
\item[(iii)] The sequence $\{\Sigma_g\}$ converges in Hausdorff sense to the union $\mathbb{S}^2\cup\mathbb{T}^2$, where $\mathbb{T}^2$ is a rotationally symmetric self-shrinking torus in $\Reals^3$. The convergence is locally smooth away from the two intersection circles constituting $\mathbb{S}^2\cap\mathbb{T}^2$.
\end{itemize}
\end{maintheorem}

This paper consists of: (1) Brief account of the construction and its important components, and (2) Proofs of the central explicit estimates of functions that for some small $\delta, \varepsilon>0$ in an appropriate sense are $\delta$-close to being eigenfunctions of the stability operator (corresp. $\delta$-Jacobi fields), on surfaces that are $\varepsilon$-close to being self-shrinkers (corresp. $\varepsilon$-geodesics) near a candidate for the self-shrinking torus.

The implications of such estimates are: The existence of a self-shrinking torus $\mathbb{T}^2$ (via existence of a smooth closed geodesic loop) with useful quantitative estimates of its geometry. Hence it gives conclusions about the Dirichlet and Neumann problems of the stability operator $\mathcal{L}$ on this "quantitative torus", leading to the main technical result below in Theorem \ref{TechThm} which is sufficient to prove Theorem \ref{Thm:torusphere}.

A thorough treatment of the background and details relating to this problem, and the construction as developed for general compact minimal hypersurfaces in $3$-manifolds by Nikolaos Kapouleas in \cite{Ka97}-\cite{Ka05} can be found in the recent \cite{Ka11}, the contents of which will not be described here (it should be noted that the highly symmetric special case enjoys significant simplifications compared to the full theory). Note that also the references \cite{Ng06}-\cite{Ng07}, and more recently \cite{KKM} and \cite{Ng11}, were concerned with gluing problems for (non-compact) self-shrinkers.

Self-shrinkers are minimal surfaces in Euclidean space with respect to a conformally changed Gaussian metric $g$:
\beq\label{GaussMetricToru}
\begin{gathered}
\Sigma^n\subseteq\Reals^{n+1}\quad\textrm{is a self-shrinker}\quad\Longleftrightarrow\quad H_{(\Reals^{n+1},g)}(\Sigma)=0,\\
g_{ij} = \frac{\delta_{ij}}{\exp\left(|X|^2/2n\right)},\quad X\in\Reals^{n+1}.
\end{gathered}
\eeq

Recall the constructions by Nicos Kapouleas (in \cite{Ka97}-\cite{Ka11}), concerning desingularization of a finite collection of compact minimal surfaces in a general ambient Riemannian $3$-manifold $(M^3,g)$. The conditions for the construction to work, in our situation, are the following, where the collection is identified with one immersed surface $\mathcal{W}$ with intersections along the (smooth) curve $\underline{\mathcal{C}}$, which can have several connected components.
\begin{conditions}[\cite{Ka05}-\cite{Ka11}]
\begin{itemize}
\item[]
\item[(I)] There are no points of triple intersection, all intersections are transverse and $\underline{\mathcal{C}}\cap\partial\mathcal{W}=\emptyset$ holds.
\item[(N1)] The kernel for the linearized operator
\beq
\mathcal{L} =\Delta+|A|^2+\Ric(\nu,\nu)\quad on \quad\mathcal{W},
\eeq
with Dirichlet conditions on $\partial\mathcal{W}$,
is trivial (unbalancing condition).

\item[(N2)] The kernel for the linearized operator
$\mathcal{L}$ on $\hat{\mathcal{W}}$,
with Dirichlet conditions on $\partial\hat{\mathcal{W}}$,
is trivial (flexibility condition).
\end{itemize}
\end{conditions}

Instead of (N1) one may substitute:
\begin{itemize}
\item[(N1')] \emph{The kernel for the linearized operator
$\mathcal{L}$ on $\mathcal{W}$,
with Neumann conditions on $\partial\mathcal{W}$,
is trivial (unbalancing condition).}
\end{itemize}

\begin{figure}
\centering
\includegraphics[width=\textwidth]{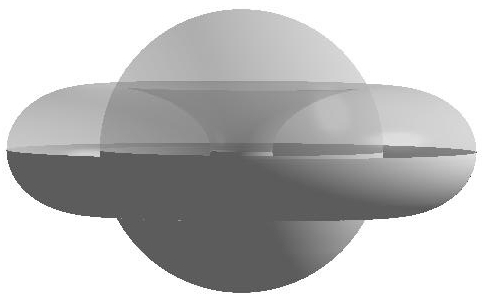}
\caption{The closed, embedded self-similar "toruspheres" $\Sigma_g^2\subseteq\Reals^3$ of genus $g$ in Theorem 1. \emph{Showing}: Immersed singular configuration, before handle insertion along two intersection curves (drawn with MATLAB).}
\end{figure}

The Neumann version (N1') of the non-degeneracy conditions will be used for the construction of the closed, embedded self-shrinkers where one solves the self-shrinker equations for graphs with the Neumann conditions over the circle of intersection of the torus and symmetry plane $\mathbb{T}^2\cap\mathcal{P}$, where $\mathbb{T}^2$ denotes a self-shrinking torus. Then the closed surfaces are obtained via simple doubling, by reflection through this plane, where standard regularity theory guarantees smoothness.

Yet another important version of the above conditions, is the one obtained by imposing symmetries, say under a large (discrete) subgroup $G\subseteq O(3)$, throughout the construction. Indeed, one may then restrict to verifying the non-degeneracy conditions (N1)--(N2) under the additional assumption of the symmetries in $G$.

While some properties of the Jacobi fields can be deduced from the known eigenvalues and eigenfunctions for the stability operator $\mathcal{L}$ (see e.g. \cite{CM2}), one does not seem to obtain enough accurate information for our purposes. We show how to estimate the quantities using the (generalized) Bellman-Gr\"{o}nwall's inequalities for second order Sturm-Liouville problems, and explicit test functions.

The care one needs to exercise when estimating the explicit constants, as well as the number and complexity of the barriers and test functions one needs to choose, becomes significant owing to mainly two factors: 1) The large Lipschitz constants of the PDE system (relative to the scale of the self-shrinking torus). The geometric reason that this happens is that the Gau\ss{} curvature in the metric on Angenent's upper half-plane has a maximal value of around 30 along the candidate torus (with the maximum occurring at the point nearest to the origin in $\Reals^3$), giving naive characteristic conjugate distances of down to $\frac{\pi}{\sqrt{K}}\sim 0.5$, while the circumference of the torus is around $\sim 7$, in the metric. Hence the solutions to Jacobi's equation can be expected to, and indeed do, oscillate several times around the circumference of $\mathbb{T}^2$, in a non-uniform way and yet just fall short of "matching up" smoothly. Furthermore: 2) The location of the conjugate points on $\mathbb{S}^2$ for the appropriate Jacobi equation is very near the singular curve $\underline{\mathcal{C}}$ (i.e. the boundaries of the connected components of $\mathcal{M}\setminus\underline{\mathcal{C}}$), which also requires tighter estimates.

Hence it takes work to strengthen the estimates to a useful form. One device to do this is what could be called the "sesqui"-shooting problem in Definition \ref{Sesqui}, for identifying the position of the torus, where "sesqui" refers to the fact that it is a double shooting problem but with a compatibility condition linking the two: That each pair of curves always meet at a simple, explicitly known solution. Here we take the round cylinder of radius $\sqrt{2}$ as reference. Since the errors are exponential in the integral of the Lipschitz constants, one may by virtue of the sesqui-shooting roughly take the square-root of the errors, which allows us to obtain bounds with explicit constants of the order of $10^1$--$10^2$ instead of $10^4$.

Although reduced in volume by the above discussion, it turns out that the explicit choice of test functions (f.ex. adequate piecewise polynomial choices can easily be found using a Taylor expansion fit at selected points, deduced from a numerical solution to the PDEs), to insert into the key estimates in Section \ref{sec:torus}, still take up several pages, and is not in itself very enlightening material to include. The computational job of checking the estimates for the many low-degree polynomials is in any case best carried out using computer software. The corresponding figures throughout this paper, showing Angenent's torus and the relevant Jacobi fields, were likewise computed and drawn in MATLAB.

Thus, the present paper does not contain the most efficient result that one could hope for, by amounting in effect to the reduction of the rigorous proof of the existence of closed self-shrinkers with genus to the still non-trivial algorithmic task of finding the zeros of several explicit low-order polynomials with integer coefficients (of course a task a computer easily manages rigorously).

\section{$\varepsilon$-Geodesics, $\delta$-Jacobi Fields: A Quantitative Self-Shrinking Torus $\mathbb{T}^2$} \label{sec:torus}
\subsection{Existence of $\mathbb{T}^2$, With Geometric Estimates}

In order to later understand precisely the kernel of the stability operators, we need to establish a detailed quantitative version of the existence of Angenent's torus. Towards the end of this section we will arrive at the basic, explicit estimates on the location and geometric quantities of such a torus, but we will first work out the explicit estimates for also the Jacobi equation before finalizing the choices in the estimate, in order to require as little as possible of the test functions.

\begin{figure}\label{TorusAndSphere}
\centering
\includegraphics[width=0.9\textwidth]{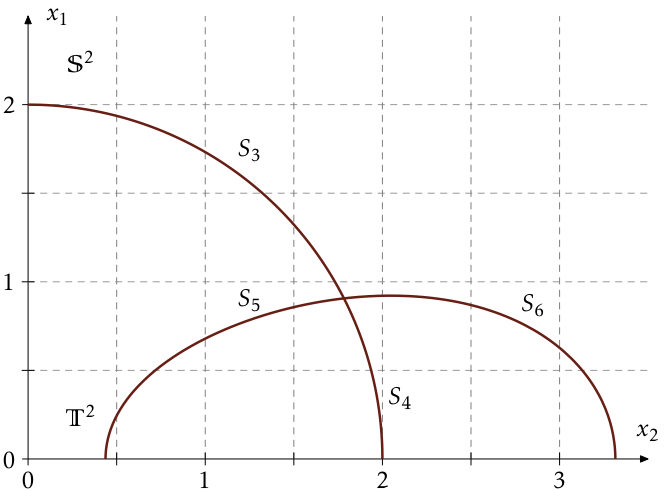}
\caption{The connected components of $\hat{\mathcal{W}}=(\mathbb{S}^2\cup\mathbb{T}^2)\setminus(\mathbb{S}^2\cap\mathbb{T}^2)$. The self-intersecting initial surface is generated by rotation around the $x_1$-axis and reflection in the $x_2$-axis.}
\end{figure}

We note that much less complicated estimates (and ditto test functions) would ensure the mere existence, leading to a different proof of the self-shrinking "doughnut" existence result from \cite{Ang}, in the $3$-dimensional case. But as explained, here we will need very precise estimates of several aspects of the geometry of $\mathbb{T}^2$.
\begin{proposition}
Let $\varepsilon_{\textrm{gap}}=10^{-3}$. There exists a closed, embedded, self-shrinking torus of revolution $\mathbb{T}^2$ with the following properties:
\begin{itemize}
\item[(1)] The torus is ($x_1\mapsto -x_1$)-symmetric.

\item[(2)] $\mathbb{T}^2$ intersects the $x_2$-axis orthogonally at two heights $a^+>a^->0$,
\begin{align}
\fracsm{4034}{1217}-\fracsm{5}{2}\varepsilon_{\textrm{gap}}&<a^+<\fracsm{4034}{1217}+\fracsm{5}{2}\varepsilon_{\textrm{gap}},\label{UpCross}\\
\fracsm{7}{16}-\fracsm{3}{98}&<a^-<\fracsm{7}{16}+\fracsm{3}{98}.\label{LowCross}
\end{align}

\item[(3)] $\mathbb{T}^2$ intersects the sphere $\mathbb{S}^2$ of radius 2 at two points $p^{\pm}=(\pm x_{\mathbb{S}^2},y_{\mathbb{S}^2})$, where:
\begin{align}
\left\|p^{\pm}-\left(\pm\fracsm{29}{32},\fracsm{41}{23}\right)\right\|_{\Reals^2}\leq 5\varepsilon_{\textrm{gap}}.\label{SphereCross}
\end{align}
Correspondingly, the angles $\angle(\pm\vec{e}_1,p^\pm)$ from the $x_1$-axis satisfy:
\beq\label{SphereCrossAngle}
\big|\angle(\pm\vec{e}_1,p^\pm)-\fracsm{11}{10}\big|\leq \fracsm{1}{20}.
\eeq
\end{itemize}

\end{proposition}
\begin{remark}
Note that there is no assertion or proof of uniqueness of $\mathbb{T}^2$ with these properties, although this is expected to be true.
\end{remark}

\begin{proof}

We first need to describe the double shooting problem (or rather "sesqui"-shooting problem, since we remove one parameter). For this we need an elementary lemma, which can either be proved (in a weaker version) using the approximation methods described later in this section, or by analysis directly of the ODE in (\ref{SSEqToru}).

\begin{lemma}
For each $d\in[0,\sqrt{2}]$ let $\gamma_d:[0,\infty)\to\Reals^+\times\Reals$ be the geodesic starting at $(0,d)$ with initial derivative $\gamma_d'(0)=(1,0)$ and consider the first time $t^0(d)$ such that $\gamma_d$ intersects the cylinder geodesic $\{x_2= \sqrt{2}\}$. Then the function $I:[0,\sqrt{2}]\to[0,\sqrt{2}]$ given by
\[
I(d):=x_1(\gamma_s(t^0(d))),
\]
is continuous and strictly increasing. It satisfies the bounds:
\[
1 \leq\sup_{d\in [\fracsm{13}{32},\fracsm{15}{32}]}I'(d)\leq\frac{5}{4}.
\]
\end{lemma}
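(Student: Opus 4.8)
The plan is to reduce everything to the scalar self-shrinker ODE~(\ref{SSEq}), i.e.\ to the geodesic equation of Angenent's metric $g_A=x_2^2\,e^{-(x_1^2+x_2^2)/2}\,(dx_1^2+dx_2^2)$ on the half-plane $\{x_2>0\}$, using two explicit reference geodesics: the horizontal line $\{x_2=\sqrt2\}$ (the round self-shrinking cylinder) and the vertical line $\{x_1=0\}$ (the self-shrinking plane through the origin). Thus $\gamma_d$ is exactly the geodesic leaving the axis-geodesic $\{x_1=0\}$ orthogonally at height $d$. Writing $\psi=\log x_2-(x_1^2+x_2^2)/4$ for the conformal potential, the geodesic equation gives $\ddot x_2=\psi_{x_2}\,(\dot x_1)^2>0$ at any point with $\dot x_2=0$ and $0<x_2<\sqrt2$ (there $\psi_{x_2}=1/x_2-x_2/2>0$ and $\dot\gamma\neq0$), and in particular $\ddot x_2(0)>0$; hence $x_2$ is strictly increasing along $\gamma_d$ with no interior maximum below $\sqrt2$, and a barrier comparison (using that $\{x_2=\sqrt2\}$ bounds a region of finite $g_A$-diameter, together with $\psi_{x_2}\ge c\,(\sqrt2-x_2)$ near $\sqrt2$) forces $x_2$ to reach $\sqrt2$ at a finite parameter $t^0(d)$. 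Since $\{x_2=\sqrt2\}$ is itself a geodesic and $\gamma_d$ is a distinct one (for $d\neq\sqrt2$), the two cannot be mutually tangent, so the crossing at $t^0(d)$ is transverse, $\dot x_2(t^0(d))>0$; in particular $\dot x_1>0$ on $[0,t^0(d)]$, so $\gamma_d$ is the graph $x_2=u_d(x_1)$ of the solution of~(\ref{SSEq}) with $u_d(0)=d$, $u_d'(0)=0$, and $I(d)$ is the first $x_1>0$ with $u_d(x_1)=\sqrt2$.

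Continuity of $I$ is then soft: solutions of~(\ref{SSEq}) depend smoothly on $d$ on the open interval $(0,\sqrt2)$, and the implicit function theorem applied to $u_d(x_1)-\sqrt2=0$ at the transverse crossing shows $I$ is smooth on $(0,\sqrt2)$. The two endpoints are handled by a limiting argument. As $d\to0^+$, equation~(\ref{SSEq}) degenerates at the axis $\{x_2=0\}$, but the strong upward curving there ($u_d''\sim1/u_d$) drives the profile off the axis so fast that $I(d)\to0$. As $d\to\sqrt2^-$, one linearizes about the cylinder geodesic, $u_d-\sqrt2=(d-\sqrt2)\,\phi_0+O((d-\sqrt2)^2)$ with $\phi_0$ solving $\phi''-\tfrac{x_1}{2}\phi'+\phi=0$, $\phi(0)=1$, $\phi'(0)=0$, so that $I(d)$ tends to the first zero of $\phi_0$ (which one checks lies in $(0,\sqrt2)$).

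The strict monotonicity is the heart of the matter. Differentiating $u_d(I(d))=\sqrt2$ in $d$ yields the first-variation identity $I'(d)=-J_d(I(d))/u_d'(I(d))$, where $J_d:=\partial u_d/\partial d$ is the Jacobi field of the family along $\gamma_d$, solving the linearization of~(\ref{SSEq}) with $J_d(0)=1$, $J_d'(0)=0$, and $u_d'(I(d))>0$ by transversality. Hence $I'(d)>0$ is \emph{equivalent} to $J_d(I(d))<0$: the first intersection of $\gamma_d$ with the cylinder geodesic must occur while the normal Jacobi field, which measures the spreading of the family off the axis-geodesic $\{x_1=0\}$, is negative --- in the relevant range, between its first and second zeros, i.e.\ strictly past the first focal parameter of $\{x_1=0\}$ along $\gamma_d$. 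For $d$ near $\sqrt2$ this follows from the linearization above together with a second-order expansion giving $I'(\sqrt2^-)>0$. For general $d$, and most delicately for small $d$ --- where~(\ref{SSEq}) is nearly singular at the axis and $J_d$ oscillates rapidly --- one locates the zeros of $J_d$ relative to the crossing parameter $I(d)$ with the required precision via the quantitative Sturm/Gr\"onwall comparisons and explicit test functions developed in the remainder of this section (comparing against the self-shrinking cylinder and $\mathbb{S}^2$, in the spirit of the sesqui-shooting reduction); for the weaker statement that suffices here one may instead argue by a direct Sturm comparison of~(\ref{SSEq}) against these reference geodesics --- note that were $I$ not strictly monotone, continuity and the first-variation identity would force a $d_0$ with $J_{d_0}(I(d_0))=0$, i.e.\ a geodesic meeting the cylinder exactly at a focal point of the axis, which such a comparison rules out. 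The main obstacle is precisely this sign control on $J_d$ at the crossing parameter in the small-$d$ regime; everything else is routine.
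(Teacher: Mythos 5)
The paper gives no proof of this lemma; immediately after stating it, it moves on to Definition~\ref{Sesqui}, having only remarked that the lemma ``can either be proved (in a weaker version) using the approximation methods described later in this section, or by analysis directly of the ODE in (\ref{SSEq}).'' So there is no paper proof to measure you against, and your attempt to supply one by direct ODE analysis is in the spirit of what the author envisioned. Your overall framework (reduce to the profile ODE, express $I'(d)$ via the variation field $J_d=\partial u_d/\partial d$, and then control the sign of $J_d$ at the crossing) is the right skeleton.

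However, as written the proposal has genuine gaps and does not amount to a proof. First, the assertion that $\gamma_d$ is globally a graph $x_2=u_d(x_1)$ up to the crossing does not follow from what you prove: you establish $\dot x_2>0$ and transversality at the crossing, and then write ``in particular $\dot x_1>0$ on $[0,t^0(d)]$,'' which is a non~sequitur --- monotonicity in $x_2$ says nothing about monotonicity in $x_1$, and for small $d$ the profile $u_d'$ becomes very large near the crossing, so graphicality over $x_1$ is exactly what must be argued (or the first-variation identity must be recast in an arclength or $x_2$-parametrization). Second, and more importantly, the heart of the lemma --- the sign $J_d(I(d))<0$ for all $d\in(0,\sqrt2)$ --- is explicitly deferred: you acknowledge it is ``the main obstacle'' and refer to ``the quantitative Sturm/Gr\"onwall comparisons and explicit test functions developed in the remainder of this section,'' but those later estimates are carried out for the specific candidate torus near $a\approx 4034/1217$, not uniformly over the whole family of $\gamma_d$ with $d\in(0,\sqrt2)$, and the alternative ``direct Sturm comparison'' is only gestured at (ruling out $J_{d_0}(I(d_0))=0$ at one point is in any case not the right dichotomy; one needs $J_d(I(d))<0$ throughout, and a priori $J_d$ could have changed sign an even number of times before the crossing). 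So the proposal sets up the correct reduction but leaves the substantive step unproven. Two smaller points: the first zero of the linearized solution $\phi_0(x_1)=1-x_1^2/2$ is at $x_1=\sqrt2$, the boundary of $(0,\sqrt2)$, not strictly inside it as you state; and near $d=\sqrt2$ both $J_d(I(d))$ and $u_d'(I(d))$ vanish to leading order, so the claimed ``second-order expansion giving $I'(\sqrt2^-)>0$'' is not a one-liner and would need to be exhibited. In short: correct strategy, but the crux (and the graphicality reduction it relies on) is not established.
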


\begin{definition}["Sesqui"-Shooting Problem]\label{Sesqui}
\begin{itemize}
\item[]
\item[(i)] Fix $a>\sqrt{2}$.
\item[(ii)] Let $\gamma_a:[0,t_a]\to\Reals^+\times\Reals$ be the fully extended geodesic contained in $\{x_1\geq 0\}$, with initial conditions $\gamma_a(0)=(0,a)$ and $\gamma_a'(0)=(1,0)$.
\item[(iii)] By the maximum principle for (\ref{SSEqToru}), we have $\gamma_a\cap\{x_2=\sqrt{2}\}\neq\emptyset$. Assume that the first point of $\gamma_a$ crossing $\sqrt{2}$ belongs to $\{0<x_1\leq \sqrt{2}\}$, and denote the time it happens by $t^0(a)$.
\item[(iv)]
By the preceding Lemma, there exists a unique $b(a)\in [0,\sqrt{2}]$ and $s^0(a)$ such that for $\gamma_b$ on $[0,s^0]$ with $\gamma_b(0)=(0,b)$ and $\gamma_b'(0)=(1,0)$, we have
\[
\gamma_a(t^0(a))=\gamma_b(s^0(a)).
\]
\item[(iv)] Define $\Phi(a)$ by
\[
\Phi(a):=\frac{\vec{e_3}\cdot(\gamma'_a(t^0(a))\times\gamma'_b(s^0(a))}{|\gamma'_a(t^0(a))||\gamma'_b(s^0(a))|},
\]
or equivalently the oriented angle between the tangents of $\gamma_a$ and $\gamma_b$ at the intersection point.
\end{itemize}
\end{definition}
The following lemma is clear from the definition:

\begin{lemma}
The function $\Phi$ in Definition \ref{Sesqui} is well-defined and continuous, on the open connected set of values $a>\sqrt{2}$ with the property that the first point of intersection of $\gamma_a$ with $\{x_2=\sqrt{2}\}$ belongs to $\{x_1\in(0,\sqrt{2})\}$.
\end{lemma}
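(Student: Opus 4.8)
The plan is to peel off, one link at a time, the chain of maps that produces $\Phi(a)$ and to check that each link is continuous; the only step with real content will be the continuity of the first–crossing time $t^{0}(a)$.

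First I would invoke standard ODE theory for the ODE (\ref{SSEq}): for $a>\sqrt{2}$ the geodesic $\gamma_{a}$ exists and is unique, and $(a,t)\mapsto(\gamma_{a}(t),\gamma_{a}'(t))$ is continuous (indeed smooth), uniformly for $t$ in compact intervals; moreover, since $\gamma_{a}'(0)=(1,0)\neq 0$ and the curve is affinely parametrized, $\gamma_{a}'(t)$ never vanishes. By Definition \ref{Sesqui}(iii) (the maximum principle for (\ref{SSEq})) the curve $\gamma_{a}$ meets $\{x_{2}=\sqrt{2}\}$, and one works on the set $U$ of those $a>\sqrt{2}$ whose first such crossing, at time $t^{0}(a)$, has $x_{1}$–coordinate in the open interval $(0,\sqrt{2})$.

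The crucial observation is that this first crossing is \emph{transverse}. Since the reference cylinder geodesic $\{x_{2}=\sqrt{2}\}$ is itself a geodesic for (\ref{SSEq}), if $\gamma_{a}$ were tangent to it at $\gamma_{a}(t^{0}(a))$ — i.e.\ had velocity in the $\{x_{2}=\sqrt 2\}$ direction there — uniqueness of geodesics would force $\gamma_{a}\equiv\{x_{2}=\sqrt{2}\}$, contradicting $\gamma_{a}(0)=(0,a)$ with $a\neq\sqrt{2}$. Hence $\tfrac{d}{dt}\,x_{2}(\gamma_{a}(t))$ is nonzero (and, coming from above, negative) at $t^{0}(a)$. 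Combined with $x_{2}(\gamma_{a}(\cdot))>\sqrt{2}$ on $[0,t^{0}(a))$ — so that $x_{2}(\gamma_{a}(\cdot))\geq\sqrt{2}+c$ on $[0,t^{0}(a)-\eta]$ for a suitable $c=c(\eta)>0$ — a barrier argument (ruling out spurious earlier crossings for $a'$ near $a$ by uniform $C^{0}$–closeness) together with the implicit function theorem (handling the transverse crossing near $t^{0}(a)$) shows that $a\mapsto t^{0}(a)$ is continuous, in fact smooth, on $U$. Consequently $a\mapsto\gamma_{a}(t^{0}(a))$, $a\mapsto\gamma_{a}'(t^{0}(a))$ and $a\mapsto x_{1}(\gamma_{a}(t^{0}(a)))$ are continuous on $U$, and openness of $U$ is a byproduct of the last of these.

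It remains to track $b(a)$ and $s^{0}(a)$. By the preceding lemma $I$ is a continuous, strictly increasing bijection onto its image, hence $I^{-1}$ is continuous; since $x_{1}(\gamma_{a}(t^{0}(a)))=I(b(a))$, this gives $b(a)=I^{-1}\bigl(x_{1}(\gamma_{a}(t^{0}(a)))\bigr)$, depending continuously on $a$. Moreover $s^{0}(a)=t^{0}(b(a))$ — two points of $\{x_{2}=\sqrt{2}\}$ with equal $x_{1}$–coordinate coincide — which is continuous by the argument of the previous paragraph applied with initial height $b(a)$. Smooth dependence on initial data then makes $a\mapsto\gamma_{b(a)}'(s^{0}(a))$ continuous, and finally $\Phi(a)$ — the planar cross product of the two nowhere–vanishing tangent vectors $\gamma_{a}'(t^{0}(a))$ and $\gamma_{b(a)}'(s^{0}(a))$, divided by the product of their lengths, i.e.\ the sine of the oriented angle between them — is a continuous function of that pair, the denominator being strictly positive. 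This yields simultaneously that $\Phi$ is well defined on $U$ (each ingredient exists and is uniquely determined for $a\in U$, with no division by zero) and that it is continuous there. I expect the transversality claim above to be the sole genuine obstacle; everything else is the routine "continuity of compositions" bookkeeping, with the structure of $U$ asserted in the statement coming along in the same elementary way.
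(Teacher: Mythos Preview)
Your argument is correct. The paper itself offers no proof, stating only that the lemma ``is clear from the definition''; your write-up supplies precisely the routine but non-empty verification that underlies such a claim. The one genuinely non-formal step you identify --- transversality of the first crossing with $\{x_2=\sqrt{2}\}$, argued via the fact that the cylinder of radius $\sqrt{2}$ is itself a geodesic so that a tangential touching would force coincidence by ODE uniqueness --- is exactly the right observation, and the rest (smooth dependence on initial data, implicit function theorem for $t^0(a)$, inversion of $I$ from the preceding lemma to recover $b(a)$) is the natural chain of compositions.

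One small point: you wave at connectedness of $U$ as ``coming along in the same elementary way'', but you do not actually argue it, and it is not quite as automatic as openness. For the paper's purposes this is harmless, since only continuity on an interval containing the two test values $a^{\pm}$ is needed for the intermediate value theorem; but if you want to match the statement as written you should either note that $U$ is an interval (e.g.\ by a monotonicity-in-$a$ argument for the first-crossing $x_1$-coordinate near $\sqrt{2}$) or weaken the claim to openness.
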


The strategy for the proof of the proposition will now be to prove for two different nearby pairs $(a^+,b^+)$ and $(a^-,b^-)$, that will be chosen such that they are geodesics broken at the cylinder $\{x_2=\sqrt{2}\}$, that:
\begin{align}
&\Phi(a^+)>0,\label{BigPhi}\\
&\Phi(a^-)<0.\label{SmallPhi}
\end{align}
Existence then follows, from the intermediate value theorem, of a pair $(a^0,b^0)$ such that:
\begin{align}
&\Phi(a^0)=0,\\
&a^-<a^0<a^+,\\
&b^-<b^0<b^+.
\end{align}
The estimates leading to the proof of (\ref{BigPhi})--(\ref{SmallPhi}) will then lead to the estimates in the proposition.

Consider now a curve (parametrized by any parameter $t$),
\[
\gamma(t)=(x(t),y(t)).
\]

Then the equation for $\gamma$ to generate a self-shrinker by rotation reads:
\[
x''y'-y''x'=\left[\frac{yx'-xy'}{2}-\frac{x'}{y}\right]\left((x')^2+(y')^2\right).
\]

We need the following simple lemma:
\begin{lemma}
If we define the operator $\mathcal{M}_1$ acting on $C^2$-functions $u:I\to\Reals$, for some interval $I\subseteq[0,\infty]$, by
\beq
\mathcal{M}_1(u,p):=\Big[\frac{x_1p-u}{2}+\frac{1}{u}\Big]\big(1+p^2\big),
\eeq
then the function $u(x_1)$, a graph over the $x_1$-axis, generates a self-shrinker by rotation (around $x_1$) if and only if
\beq
u''=\mathcal{M}_1(u,u').
\eeq
Likewise for graphs over the $x_2$-axis, defining
\beq
\mathcal{M}_2(f,q):= \left[ \left(\frac{x_2}{2} - \frac{1}{x_2}\right) q - \frac{f}{2} \right] \left(1 + q^2 \right),
\eeq
again characterizes such solutions via
\beq
f''=\mathcal{M}_2(f,f').
\eeq
\end{lemma}

The lemma is shown directly from the definition of a self-shrinker (see \cite{KM}). We now compute that:
\begin{align}
&\frac{\partial}{\partial u}\mathcal{M}_1(u,p)=\left[-\frac{1}{2}-\frac{1}{u^2}\right](1+p^2),\\
&\frac{\partial}{\partial p}\mathcal{M}_1(u,p)=\frac{x_1}{2}(1+p^2)+2p\left[\frac{x_1 p -u}{2}+\frac{1}{u}\right],\\
&\frac{\partial}{\partial f}\mathcal{M}_2(f,q)=-\frac{1}{2}\left(1 + q^2 \right),\\
&\frac{\partial}{\partial q}\mathcal{M}_2(f,q)=\left(\frac{x_2}{2} - \frac{1}{x_2}\right)(1+3q^2)-fq.
\end{align}

We will first consider graphs $u:[0,\fracsm{3}{5}]\to\Reals$. Now, considering an approximate solution $U$, fix the quantity $\varepsilon_1^T$, later to be chosen, which reflects the order of magnitude of the precision with which we wish to determine the position of $\mathbb{T}^2$ on this interval.

\beq
\mathcal{M}_1 (U,U')-\mathcal{M}_1(u,u')=(U-u)\frac{\partial}{\partial u}\mathcal{M}_1(\xi,u')+(U'-u')\frac{\partial}{\partial p}\mathcal{M}_1(U,\xi'),
\eeq
for some $\xi\in[U(x_1),u(x_1)]$ and $\xi'\in[U'(x_1),u'(x_1)]$.

\underline{Top of the torus: $x_1$-graph}

Assume we have the uniform estimates:
\[
|U''-\mathcal{M}_1(U,U')|\leq\varepsilon_1^T,
\]
for some $\varepsilon_1^T$. Assume also the following bounds:
\begin{align*}
&\int_0^{x_1}\left|\frac{1}{2}+\frac{1}{\xi^2}\right|(1+(u'(x_1))^2)dx_1\leq \frac{4}{5}x_1,\quad x_1\in[0,\fracsm{3}{5}],\quad |\xi-U(x_1)|\leq\varepsilon_0,\\
&\int_0^{x_1}\left|\frac{x_1}{2}(1+(\xi')^2)+2\xi'\left[\frac{x_1 \xi' -U(x_1)}{2}+\frac{1}{U(x_1)}\right]\right|dx_1\leq \frac{8}{3}x_1^2,\quad x_1\in[0,\fracsm{3}{5}],\quad |\xi'-U'(x_1)|\leq\varepsilon_0.
\end{align*}
If we let $\varphi(x_1):=|U'(x_1)-u'(x_1)|$, we can now estimate (recall that one has $||f'|'|=|f''|$ almost everywhere):
\beq
\begin{split}
\varphi'(x_1)&\leq \left||U'-u'|'\right|\overset{\textrm{a.e}}{=}|U''-u''|\\
&\leq |\mathcal{M}_1 (U,U')-\mathcal{M}_1(u,u')|+\varepsilon_1^T\\
&\leq |\partial_u\mathcal{M}_1(\xi,u')||U-u|+|\partial_p\mathcal{M}_1(U,\xi')||U'-u'|+\varepsilon_1^T\\
&\leq |\partial_p\mathcal{M}_1(U,\xi')|\varphi(x_1) + |\partial_u\mathcal{M}_1(\xi,u')|\int_{0}^{x_1}\varphi(s)ds+|\partial_u\mathcal{M}_1(\xi,u')||U(0)-u(0)|+\varepsilon_1^T.
\end{split}
\eeq
Thus we integrate this inequality, which holds almost everywhere with respect to the Lebesgue measure, and get:
\begin{align*}
\varphi(x_1)&\leq \varphi(0)+\int_{0}^{x_1}|\partial_p\mathcal{M}_1(U,\xi')(s)|\varphi(s)ds +\int_{0}^{x_1}|\partial_u\mathcal{M}_1(\xi,u')(t)|\int_{0}^{t}\varphi(s)dsdt\\
&\quad\quad\quad+ \fracsm{4}{5}x_1|U(0)-u(0)|+\varepsilon_1^Tx_1\\
& \leq \varphi(0) + \fracsm{4}{5}x_1|U(0)-u(0)|+\varepsilon_1^Tx_1+\int_{0}^{x_1}\Big[|\partial_p\mathcal{M}_1(U,\xi')(s)|+\fracsm{4}{5}x_1\Big]\varphi(s)ds.
\end{align*}
We are now ready to use the integral form of Gr\"o{}nwall-Bellman's inequality, namely for $\alpha(t)$ a non-decreasing function:
\[
\forall t\in I: \varphi(t)\leq \alpha(t)+\int_{a}^t \beta(s)\varphi(s)ds\quad\Longrightarrow\quad\forall t\in I:  \varphi(t)\leq \alpha(t)\exp\left\{\int_{a}^t\beta(s)ds\right\}.
\]
Here we thus conclude from the above, that
\beq
\varphi(x_1)\leq \Big[\varphi(0) + \fracsm{4}{5}x_1|U(0)-u(0)|+\varepsilon_1^Tx_1\Big]\exp\Big\{\fracsm{8}{3}x_1^2+\fracsm{4}{5}x_1^2\Big\},
\eeq
so that here we obtain the estimates (for $\varphi(0)=0$):
\begin{align*}
|U'(x_1)-u'(x_1)|&\leq \fracsm{3}{5}\exp(\fracsm{156}{125})(\varepsilon_1^T+\fracsm{4}{5}|U(0)-u(0)|),\\
|U(x_1)-u(x_1)|&\leq |U(0)-u(0)|+(\varepsilon_1^T+\fracsm{4}{5}|U(0)-u(0)|)\int_0^{\fracsm{3}{5}}s\exp\big(\fracsm{52}{15}s^2\big)ds\\
&=|U(0)-u(0)|+\fracsm{15}{104}\big(\exp\big(\fracsm{52}{15}(\fracsm{3}{5})^2\big)-1\big)(\varepsilon_1^T+\fracsm{4}{5}|U(0)-u(0)|)\\
&\leq \fracsm{23}{80}|U(0)-u(0)|+\fracsm{9}{25}\varepsilon_1^T.
\end{align*}

\underline{Top of the torus: $x_2$-graph to the sphere}

We continue with the next part, which is graphical over the $x_2$-axis. In this region we again let $\psi(x_2):=|F'-f'|$. Here we will assume the estimate
\beq
|F''-\mathcal{M}_2(F,F')|\leq\varepsilon_2^T,
\eeq
and furthermore the estimates on $x_2\in[y_{\mathbb{S}^2},u_T(3/5)]$
\begin{align*}
&I^{(2)}_u=\int_{x_2}^{u_T(3/5)}\left|\frac{1}{2}(1+(F')^2)\right|dx_2\leq\fracsm{13}{8}-\fracsm{x_2}{2},\\
&I^{(2)}_p=\int_{x_2}^{u_T(3/5)}\left|\left(\frac{x_2}{2} - \frac{1}{x_2}\right)(1+3(\xi')^2)-F(x_2) \xi'\right|dx_2\leq \fracsm{7}{4}-\fracsm{9}{10}\left(x_2-y_{\mathbb{S}^2}\right)^2.
\end{align*}
We can then estimate as follows:
\begin{align*}
\psi'(x_2)&\leq \left|\partial_u\mathcal{M}_2\right||F-f|+\left|\partial_p\mathcal{M}_2\right||F'-f'|+\varepsilon_2^T\\
&\leq \left|\partial_u\mathcal{M}_2\right|\int_{x_2}^{a}\psi(s)ds+\left|\partial_p\mathcal{M}_2\right|\psi(x_2)+\left|\partial_u\mathcal{M}_2\right||F(a)-f(a)|+\varepsilon_2^T,
\end{align*}
which integrates to
\begin{align}
\psi(x_2)\leq &\int_{x_2}^{u_T(3/5)}\left[|\partial_p\mathcal{M}_2|+\fracsm{13}{8}-\fracsm{x_2}{2}\right]\psi(s)ds+(\fracsm{13}{8}-\fracsm{x_2}{2})|F({u_T(3/5)})-f({u_T(3/5)})|\\
&\qquad+\varepsilon_2^T(u_T(3/5)-x_2)+\psi(a),
\end{align}
so that, again by Gr\"o{}nwall-Bellman,
\begin{align*}
\psi(x_2)\leq &\Big[(\fracsm{13}{8}-\fracsm{x_2}{2})|F(a)-f(a)|+|F'(a)-f'(a)|+\varepsilon_2^T(u_T(3/5)-x_2)\Big]\times\\
&\exp\left\{\fracsm{7}{4}-\fracsm{9}{10}\left(x_2-y_{\mathbb{S}^2}\right)^2+\left(\fracsm{13}{8}-\fracsm{x_2}{2}\right)\left(u_T(3/5)-x_2\right)\right\}.
\end{align*}
The endpoint estimates for $|F'(x_2) - f'(x_2)|$ are thus:
\begin{align*}
\psi(y_{\mathbb{S}^2})
&\leq \fracsm{59}{4}|F(u_T(3/5))-f(u_T(3/5))|+\fracsm{54}{5}|F'(u_T(3/5))-f'(u_T(3/5))|+\fracsm{377}{20}\varepsilon_2^T\\
&\leq \fracsm{59}{4}\frac{|U(F(u_T(3/5)))-u(f(u_T(3/5)))|}{|U'(F(\xi))|}+\fracsm{54}{5}\frac{|U'(F(u_T(3/5)))-u'(f(u_T(3/5)))|}{|U'(F(u_T(3/5)))||u'(f(u_T(3/5)))|}+\fracsm{377}{20}\varepsilon_2^T\\
&\leq \fracsm{59}{4}\fracsm{10}{11}\left(\fracsm{9}{25}\varepsilon_1^T+\fracsm{23}{80}|U(0)-u(0)|\right)+\fracsm{54}{5}\left(\fracsm{10}{11}\right)^2\fracsm{3}{5}\exp(\fracsm{156}{125})(\varepsilon_1^T+\fracsm{4}{5}|U(0)-u(0)|)+\fracsm{377}{20}\varepsilon_2^T\\
&\leq 27|U(0)-u(0)|+34\varepsilon_1^T+19\varepsilon_2^T.
\end{align*}
Integrating the above estimate for $\psi(x_2)$, we also get:
\begin{align*}
|F(y_{\mathbb{S}^2})-f(y_{\mathbb{S}^2})|\leq & (1+\fracsm{63}{8})|F(u_T(3/5))-f(u_T(3/5))|+\fracsm{83}{20}|F'(u_T(3/5))-f'(u_T(3/5))|+\fracsm{137}{20}\varepsilon_2^T\\
\leq &12|U(0)-u(0)|+15\varepsilon_1^T+7\varepsilon_2^T.
\end{align*}

\underline{Top of the torus, $x_2$-graph from sphere to cylinder}

We consider the region between the sphere and cylinder, and let again $\psi(x_2):=|F'-f'|$. Here we will assume the estimate
\beq
|F''-\mathcal{M}_2(F,F')|\leq\varepsilon_3^T,
\eeq
and furthermore the estimates on $x_2\in[y_{\mathbb{S}^2},\sqrt{2}]$
\begin{align*}
&I^{(2)}_u=\int_{x_2}^{y_{\mathbb{S}^2}}\left|\frac{1}{2}(1+(F')^2)\right|dx_2\leq\fracsm{1}{4}(y_{\mathbb{S}^2}-x_2),\\
&I^{(2)}_p=\int_{x_2}^{y_{\mathbb{S}^2}}\left|\left(\frac{x_2}{2} - \frac{1}{x_2}\right)(1+3(\xi')^2)-F(x_2) \xi'\right|dx_2\leq \fracsm{27}{50}(y_{\mathbb{S}^2}-x_2).
\end{align*}

We estimate as follows:
\begin{align*}
\psi'(x_2)&\leq \left|\partial_u\mathcal{M}_2\right||F-f|+\left|\partial_p\mathcal{M}_2\right||F'-f'|+\varepsilon_2^T\\
&\leq \left|\partial_u\mathcal{M}_2\right|\int_{x_2}^{y_{\mathbb{S}^2}}\psi(s)ds+\left|\partial_p\mathcal{M}_2\right|\psi(x_2)+\left|\partial_u\mathcal{M}_2\right||F(y_{\mathbb{S}^2})-f(y_{\mathbb{S}^2})|+\varepsilon_3^T,
\end{align*}
which integrates to:
\begin{align}
\psi(x_2)\leq & \psi(y_{\mathbb{S}^2})+\int_{x_2}^{y_{\mathbb{S}^2}}\left[|\partial_p\mathcal{M}_2|+\fracsm{1}{4}(y_{\mathbb{S}^2}-x_2)\right]\psi(s)ds+\fracsm{1}{4}(y_{\mathbb{S}^2}-x_2)|F(y_{\mathbb{S}^2})-f(y_{\mathbb{S}^2})|+\varepsilon_3^T(y_{\mathbb{S}^2}-x_2),
\end{align}
so that, again by Gr\"o{}nwall-Bellman,
\begin{align*}
\psi(x_2)\leq &\Big[\fracsm{1}{4}(y_{\mathbb{S}^2}-x_2)|F(y_{\mathbb{S}^2})-f(y_{\mathbb{S}^2})|+|F'(y_{\mathbb{S}^2})-f'(y_{\mathbb{S}^2})|+\varepsilon_3^T(y_{\mathbb{S}^2}-x_2)\Big]\times\\
&\exp\left\{\fracsm{27}{50}(y_{\mathbb{S}^2}-x_2)+\fracsm{1}{4}(y_{\mathbb{S}^2}-x_2)^2\right\}.
\end{align*}

Inserting the previous estimate gives:
\begin{align*}
\psi(\sqrt{2})&\leq \fracsm{1}{8}|F(y_{\mathbb{S}^2})-f(y_{\mathbb{S}^2})|+\fracsm{4}{3}|F'(y_{\mathbb{S}^2})-f'(y_{\mathbb{S}^2})|+\fracsm{1}{2}\varepsilon_3^T\\
&\leq \fracsm{1}{8}\left(12|U(0)-u(0)|+15\varepsilon_1^T+7\varepsilon_2^T\right)+\fracsm{4}{3}\left(27|U(0)-u(0)|+34\varepsilon_1^T+19\varepsilon_2^T\right)+\fracsm{1}{2}\varepsilon_3^T.
\end{align*}
Hence, we finally obtain the estimate:
\begin{align*}
|F'(\sqrt{2})-f'(\sqrt{2})|&\leq 36|U(0)-u(0)|+45\varepsilon_1^T+25\varepsilon_2^T+\fracsm{1}{2}\varepsilon_3^T.
\end{align*}
Integrating the estimates, we also get:
\begin{align*}
|F(y_{\mathbb{S}^2})-f(y_{\mathbb{S}^2})|\leq & (1+\fracsm{1}{50})|F(y_{\mathbb{S}^2})-f(y_{\mathbb{S}^2})|+\fracsm{33}{80}|F'(y_{\mathbb{S}^2})-f'(y_{\mathbb{S}^2})|+\fracsm{2}{25}\varepsilon_3^T\\
\leq &24|U(0)-u(0)|+30\varepsilon_1^T+15\varepsilon_2^T+\fracsm{2}{25}\varepsilon_3^T.
\end{align*}

\underline{Bottom of the torus: $x_1$-graph from the plane}

Assume uniform estimates:
\[
|U''-\mathcal{M}_1(U,U')|\leq\varepsilon^B_1,
\]
for some $\varepsilon_1^B$. Assume also, for the argument, the following bounds (for  $x_1\in[0,\fracsm{1}{2}]$):
\begin{align*}
&\int_0^{x_1}\left|\frac{1}{2}+\frac{1}{\xi^2}\right|(1+(u'(x_1))^2)dx_1\leq \frac{29}{5}x_1+\frac{1}{20},\quad |\xi-U(x_1)|\leq\varepsilon_0,\\
&\int_0^{x_1}\left|\frac{x_1}{2}(1+(\xi')^2)+2\xi'\left[\frac{x_1 \xi' -U(x_1)}{2}+\frac{1}{U(x_1)}\right]\right|dx_1\leq 4x_1^2+\frac{1}{5}x_1,\quad |\xi'-U'(x_1)|\leq\varepsilon_0.
\end{align*}
As always, we get with $\varphi(x)=|U'(x)-u'(x)|$:
\begin{align*}
\varphi(x_1)&\leq \varphi(0)+\int_{0}^{x_1}|\partial_p\mathcal{M}_1(U,\xi')(s)|\varphi(s)ds +\int_{0}^{x_1}|\partial_u\mathcal{M}_1(\xi,u')(t)|\int_{0}^{t}\varphi(s)dsdt\\
&\quad\quad\quad+ (\fracsm{29}{5}x_1+\fracsm{1}{20})|U(0)-u(0)|+\varepsilon_1^Bx_1\\
& \leq \varphi(0) + (\fracsm{29}{5}x_1+\fracsm{1}{20})|U(0)-u(0)|+\varepsilon_1^Bx_1+\int_{0}^{x_1}\Big[|\partial_p\mathcal{M}_1(U,\xi')(s)|+\fracsm{29}{5}x_1+\fracsm{1}{20}\Big]\varphi(s)ds.
\end{align*}
Using once again Gr\"o{}nwall-Bellman's inequality,
\beq
\varphi(x_1)\leq \Big[\varphi(0) + (\fracsm{29}{5}x_1+\fracsm{1}{20})|U(0)-u(0)|+\varepsilon_1^Bx_1\Big]\exp\Big\{\fracsm{29}{5}x^2_1+\fracsm{x_1}{20}+4x_1^2+\fracsm{1}{5}x_1\Big\},
\eeq
and we obtain the estimates (for $\varphi(0)=0$):
\begin{align*}
|U'(x_1)-u'(x_1)|&\leq \Big[(\fracsm{29}{5}x_1+\fracsm{1}{20})|U(0)-u(0)|+\varepsilon_1^Bx_1\Big]\exp\Big\{\fracsm{49}{5}x^2_1+\fracsm{x_1}{4}\Big\}\\
|U'(\fracsm{1}{2})-u'(\fracsm{1}{2})|&\leq 39|U(0)-u(0)|+\fracsm{28}{5}\varepsilon^B_1,\\
|U(x_1)-u(x_1)|&\leq |U(0)-u(0)|\left(1+\int_0^{x_1}(\fracsm{29}{5}s+\fracsm{1}{20})\exp\Big\{\fracsm{49}{5}s^2+\fracsm{s}{4}\Big\}ds\right)\\
&+\varepsilon^B_1\int_0^{x_1}s\exp\Big\{\fracsm{49}{5}s^2+\fracsm{s}{4}\Big\}ds\\
|U(\fracsm{1}{2})-u(\fracsm{1}{2})|&\leq \fracsm{23}{5}|U(0)-u(0)|+\fracsm{3}{5}\varepsilon^B_1.
\end{align*}

\underline{Bottom: $x_2$-graph to cylinder}

We continue with the next part, which is graphical over the $x_2$-axis. In this region we again let $\psi(x_2)=\psi_B(x_2):=|F'-f'|$. Here we will assume the estimate
\beq
|F''-\mathcal{M}_2(F,F')|\leq\varepsilon^B_2,
\eeq
and furthermore the estimates on $x_2\in[a_0,\sqrt{2}]$ (where $|a_0-\fracsm{28}{39}|\leq \varepsilon_0$).
\begin{align*}
&I^{(2)}_u=\int_{a_0}^{x_2}\left|\frac{1}{2}(1+(F')^2)\right|dx_2\leq\frac{16}{25}x_2-\frac{3}{7},\\
&I^{(2)}_p=\int_{a_0}^{x_2}\left|\left(\frac{x_2}{2} - \frac{1}{x_2}\right)(1+3(\xi')^2)-F(x_2) \xi'\right|dx_2\leq \frac{9}{10}-\frac{11}{10}\left(x_2-\frac{3}{2}\right)^2.
\end{align*}
We can then estimate as follows:
\begin{align*}
\psi'(x_2)&\leq \left|\partial_u\mathcal{M}_2\right||F-f|+\left|\partial_p\mathcal{M}_2\right||F'-f'|+\varepsilon_2^B\\
&\leq \left|\partial_u\mathcal{M}_2\right|\int_a^{x_2}\psi(s)ds+\left|\partial_p\mathcal{M}_2\right|\psi(x_2)+\left|\partial_u\mathcal{M}_2\right||F(a)-f(a)|+\varepsilon_2^B,
\end{align*}
which integrates to
\begin{align}
\psi(x_2)\leq &\int_{a_0}^{x_2}\left[|\partial_p\mathcal{M}_2|+\fracsm{16}{25}x_2-\fracsm{3}{7}\right]\psi(s)ds+(\fracsm{16}{25}x_2-\fracsm{3}{7})|F({a_0})-f({a_0})|+\varepsilon_2^B(x_2-a_0)+\psi(a).
\end{align}
By Gr\"o{}nwall-Bellman,
\begin{align*}
\psi(x_2)\leq &\Big[(\fracsm{16}{25}x_2-\fracsm{3}{7})|F(a)-f(a)|+|F'(a)-f'(a)|+\varepsilon_2^B(x_2-a_0)\Big]\times\\
&\exp\left\{\fracsm{9}{10}-\fracsm{11}{10}\left(x_2-\fracsm{3}{2}\right)^2+\left(x_2-a_0\right)\left(\fracsm{16}{25}x_2-\fracsm{3}{7}\right)\right\}\\
&\leq \fracsm{13}{8}|F(a_0)-f(a_0)|+\fracsm{17}{5}|F'(a_0)-f'(a_0)|+\fracsm{69}{29}\varepsilon^B_2\\
&\leq \fracsm{13}{8}\frac{|U(F(a_0))-u(f(a_0))|}{|U'(F(\xi))|}+\fracsm{17}{5}\frac{|U'(F(a_0))-u'(f(a_0))|}{|U'(F(a_0))||u'(f(a_0))|}+\fracsm{69}{29}\varepsilon^B_2\\
&\leq \fracsm{13}{8}\fracsm{10}{12}\left(\fracsm{3}{5}\varepsilon^B_1+\fracsm{23}{5}|U(0)-u(0)|\right)+\fracsm{17}{5}\left(\fracsm{10}{12}\right)^2(\fracsm{28}{5}\varepsilon_1^B+39|U(0)-u(0)|)+\fracsm{69}{29}\varepsilon^B_2\\
&\leq 100|U(0)-u(0)|+15\varepsilon^B_1+\fracsm{5}{2}\varepsilon^B_2.
\end{align*}
Integrating the first line of this estimate, we also get:
\begin{align*}
|F(\sqrt{2})-f(\sqrt{2})|\leq & (1+\fracsm{23}{50})|F(a_0)-f(a_0)|+\fracsm{14}{9}|F'(a_0)-f'(a_0)|+\fracsm{2}{3}\varepsilon_2\\ 
\leq &48|U(0)-u(0)|+7\varepsilon^B_1+\fracsm{2}{3}\varepsilon^B_2.
\end{align*}

Hence one arranges that for the test functions $\gamma_\textrm{up}, \gamma_\textrm{low}:[0,1]\to\Reals\times\Reals_+$:
\begin{align}
\gamma_\textrm{down}(0)=(0,\fracsm{4034}{1217}-\fracsm{5}{2}\varepsilon_{\textrm{ud}})\\
\gamma_\textrm{up}(0)=(0,\fracsm{4034}{1217}+\fracsm{5}{2}\varepsilon_{\textrm{ud}}).
\end{align}
Then, as explained earlier, the intermediate value theorem applied to the sesqui-shooting problem implies the existence of the torus with the estimates (\ref{UpCross})--(\ref{LowCross}).

To see Property (3), we recall that
\beq
|F(y_{\mathbb{S}^2})-f(y_{\mathbb{S}^2})|\leq \fracsm{317}{25}|U(0)-u(0)|+11\varepsilon_1+\fracsm{27}{5}\varepsilon_2,
\eeq
from which the estimate (\ref{SphereCross}) follows.

\end{proof}

\subsection{The Stability Operator $\mathcal{L}$ on Subsets of $\mathbb{S}^2$ and $\mathbb{T}^2$}

Recall the self-shrinker equation for a smooth oriented surface $S\subseteq\Reals^3$ to be a self-shrinker (shrinking towards the origin with singular time $T=1$) is
\beq\label{SSEqToru}
H_S(\vec{X})- \fracsm{1}{2} \vec{X} \cdot \vec{\nu}_S(\vec{X}) = 0,
\eeq
for each $\vec{X} \in S$, where by convention $H_S=\sum_1^n\kappa_i$ is the sum of the signed principal curvatures w.r.t. the chosen normal $\vec{\nu}_S$ (i.e. $H=2$ for the sphere with outward pointing $\vec{\nu}$). We have normalized Equation (\ref{SSEqToru}) so that $T =  1$ is the singular time. 

For a smooth  normal  variation $\vec{X}_t$ determined by a function $u$ via $X_t=X_0+tu\vec{\nu}_{S}$, where $\vec{X}_0$ parametrizes $S$, the pointwise linear change in (minus) the quantity on the left hand side in (\ref{SSEqToru}) is given by the stability operator (see the Appendix, and also \cite{CM6}-\cite{CM7} for more properties of this operator)
\begin{equation} \label{SSEqToru_linearization}
\mathcal{L}_{S} u = \Delta_{S}  u+ |A_{S}|^2 u - \fracsm{1}{2} \left(\vec{X}\cdot\nabla_{S} u - u  \right).
\end{equation}

We are now ready to prove the main technical theorem in this paper, concerning the kernel on the connected components (with boundaries) of
\[
(\mathbb{S}^2\cup\mathbb{T}^2)\setminus(\mathbb{S}^2\cap\mathbb{T}^2),
\]
where $\mathbb{T}^2$ is the accurately estimated "quantitative" torus.

\begin{theorem}\label{TechThm}
There exists $N>0$ large enough that for the below six surfaces with boundary $S_1,\ldots,\mathcal{S}_6$,
\[
\ker\mathcal{L}_{S_i}=\{0\}\quad\textrm{for}\quad i=1,\ldots,6\quad\textrm{[w/ indicated boundary conditions]},
\]
when imposing at least $N$-fold rotational symmetry:
\begin{itemize}
\item[(1)] The surfaces $S_1=\mathbb{S}^2\cap\{x_1\geq 0\}$ and $S_2=\mathbb{T}^2\cap\{x_1\geq 0\}$ [Neumann conditions].
\item[(2)] The components $S_3$ and $S_4$ of $\{x_1\geq 0\}\cap\mathbb{S}^2\setminus(\mathbb{S}^2\cap\mathbb{T}^2)$ [Neumann conditions on $\{x_1=0\}$, Dirichlet conditions elsewhere].
\item[(3)] The components $S_5$ and $S_6$ of $\{x_1\geq 0\}\cap\mathbb{T}^2\setminus(\mathbb{S}^2\cap\mathbb{T}^2)$ [Neumann conditions on $\{x_1=0\}$, Dirichlet conditions elsewhere].
\end{itemize}
\end{theorem}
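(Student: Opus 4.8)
The plan is to handle the six surfaces by reducing, via the imposed $N$-fold rotational symmetry, to a collection of one-dimensional Sturm--Liouville eigenvalue problems along the generating curves, and then to show that $0$ is not an eigenvalue of any of them, for $N$ large. Concretely, a function $u$ in the kernel of $\mathcal{L}_{S_i}$ that is invariant under the rotation group $\mathbb{Z}_N$ (or transforms in a fixed character of it) decomposes in Fourier modes $e^{\mathrm{i}m\theta}$ in the rotation angle $\theta$, with $|m|$ either $0$ or $\geq N$; for $|m|\geq N$ the angular term contributes $-m^2/(\rho^2)$ (with $\rho$ the distance to the rotation axis) to the operator, which is a large negative potential and, by a Rayleigh-quotient argument, forces $u\equiv 0$ once $N$ exceeds an explicit constant depending only on the geometry of $\mathbb{S}^2$ and the quantitative $\mathbb{T}^2$ from the Proposition. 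So the entire content reduces to the rotationally \emph{invariant} sector, $m=0$: here $\mathcal{L}_{S_i}$ acts on functions of a single variable (arclength along the profile curve, for the torus pieces; polar angle $\phi$ from the symmetry axis, for the sphere pieces), giving second-order ODEs $w'' + Q(s)w = 0$ with prescribed boundary conditions.

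The next step is to identify these ODEs explicitly and rule out the relevant boundary-value solutions. For $S_1=\mathbb{S}^2\cap\{x_1\ge 0\}$ with Neumann conditions: on the round sphere of radius $2$, $|A|^2=1/2$ and $\vec X\cdot\nabla_S u - u = -u$ (since $\vec X$ is normal), so $\mathcal{L}_{\mathbb{S}^2}u = \Delta_{\mathbb{S}^2}u + u$; the invariant kernel is spanned by the $\ell=1$ spherical harmonic $x_1$, which is \emph{odd} under $x_1\mapsto -x_1$ and does not satisfy the Neumann condition on $\{x_1=0\}$, hence is excluded --- a clean finite computation rather than an estimate. For $S_2=\mathbb{T}^2\cap\{x_1\ge 0\}$, and for the four pieces $S_3,\dots,S_6$, the $m=0$ Jacobi equation is exactly the linearized self-shrinker ODE whose coefficients are controlled by $\partial_u\mathcal{M}_i,\partial_p\mathcal{M}_i$ — the very quantities estimated in the Proposition's proof. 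Here I would run the Gr\"onwall--Bellman machinery \emph{in reverse}: a nontrivial Jacobi field on $S_2$ satisfying Neumann conditions at both ends of the half-torus would be a closed (periodic) Jacobi field on the full $\mathbb{T}^2$, and the quantitative no-conjugate-point/non-degeneracy estimates — encoded in the oscillation bounds and the $\varepsilon_{\textrm{gap}}$-closeness of $\mathbb{T}^2$ to the reference configuration — show such a field cannot close up (the "fail to match up" phenomenon from the introduction, now used positively). For $S_3,\dots,S_6$, the Dirichlet end sits near a point that is \emph{not} conjugate to the symmetry-plane end along the relevant Jacobi equation; on the sphere pieces this is where one must check that the first conjugate point of $\Delta_{\mathbb{S}^2}+1$ from $\{x_1=0\}$ lies strictly beyond the intersection circle $\mathbb{S}^2\cap\mathbb{T}^2$, which is exactly why the Proposition pins down the crossing angle $\angle(\pm\vec e_1,p^\pm)$ to within $5\varepsilon_{\textrm{gap}}$ of $11/10$.

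The main obstacle is precisely the last point for the torus pieces $S_5,S_6$ and the transition region on $S_2$: as the introduction emphasizes, the solutions of Jacobi's equation oscillate several times around the torus's circumference in the Gauss metric, so one cannot use a crude convexity or single-sign argument — one needs the sign of the Jacobi field (or of the angle functional $\Phi$ from Definition \ref{Sesqui}, differentiated in the "vertical" direction) to be controlled with the explicit $10^1$--$10^2$-size constants that the sesqui-shooting reduction makes available. I would package this as: set up the comparison ODE with the estimated coefficient bounds, produce explicit piecewise-polynomial test/barrier functions (as alluded to in the introduction) that sandwich the true Jacobi field and its derivative on each sub-interval, and verify at the Dirichlet/Neumann endpoints that the resulting value and derivative cannot simultaneously vanish. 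The bookkeeping — tracking how errors compound across the four sub-regions (top $x_1$-graph, top $x_2$-graph to the sphere, $x_2$-graph sphere-to-cylinder, bottom $x_1$- and $x_2$-graphs) and checking the self-consistency of the assumed integral bounds on $\partial_u\mathcal{M}_i,\partial_p\mathcal{M}_i$ — is routine in structure but delicate in the constants, and is where essentially all the work lies; the $\mathbb{Z}_N$-reduction and the sphere computations are comparatively immediate.
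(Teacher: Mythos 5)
The overall architecture of your proposal matches the paper: use the high-symmetry assumption to kill all Fourier modes $|m|\geq N$ (the paper does this via a maximum principle giving a bound $M_0(\gamma)$, you via a Rayleigh-quotient argument — equivalent in effect), reduce to the rotationally invariant sector $m=0$, treat sphere pieces by an ODE argument and torus pieces via the Gr\"onwall--Bellman machinery carried over from the existence proof, and for $S_2$ check that Jacobi fields launched from the two Neumann ends cannot "match up". That skeleton is right, and your account of the torus pieces $S_2,S_5,S_6$ is a fair (if vague) description of what the paper actually does: it computes explicit endpoint values and derivatives of the $m=0$ Jacobi fields at $\mathbb{S}^2\cap\mathbb{T}^2$ (formulas (\ref{topmatrix}) and (\ref{botmatrix})) and verifies $\det\mathcal{N}>0$ for $S_2$ and nonvanishing at the Dirichlet ends for $S_5,S_6$.

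However, there is a genuine error in your treatment of the sphere pieces. You compute correctly that $\mathcal{L}_{\mathbb{S}^2} = \Delta_{\mathbb{S}^2(2)} + 1$, but then assert that "the invariant kernel is spanned by the $\ell=1$ spherical harmonic $x_1$." This is false. On $\mathbb{S}^2(2)$ the coordinate function $x_1$ is an eigenfunction of $\mathcal{L}$ with eigenvalue $1/2$, not $0$: $\Delta_{\mathbb{S}^2(2)}x_1 = -\tfrac12 x_1$, so $\mathcal{L}x_1 = \tfrac12 x_1\neq 0$. The $m=0$ equation $w''+\cot\varphi\, w'+4w=0$ is Legendre's equation with \emph{non-integer} degree $l_0=(\sqrt{17}-1)/2$ (the positive root of $l(l+1)=4$), so there is no spherical-harmonic solution and no odd/even parity shortcut. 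The paper's actual argument is: the only solution regular at the pole $\varphi=0$ is $P_{l_0}(\cos\varphi)$, and one must check by a Gamma-function computation that $\frac{d}{d\varphi}P_{l_0}(\cos\varphi)|_{\varphi=\pi/2}\neq0$ (this rules out the Neumann kernel on $S_1$), and likewise that $P_{l_0}(\cos\varphi)$ (for $S_3$) and the appropriate $P_{l_0},Q_{l_0}$ combination (for $S_4$) are nonzero at the crossing angle $\approx 11/10$, which is why the quantitative bound (\ref{SphereCross}) on $\angle(\pm\vec e_1,p^\pm)$ is needed. So the sphere pieces are \emph{not} "a clean finite computation rather than an estimate" — they rest on the same kind of quantitative control of the torus position, feeding into explicit Legendre function evaluations; your parity argument, as written, would not rule out the actual candidate kernel element.
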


Recall first the variational characterization of shrinkers, as critical points of the functional
\beq
A(\Sigma)=\int_{\Sigma} e^{-|X(p)|^2/4}dA(p).
\eeq

As exploited in \cite{KKM} in the planar case, the conjugation identity connecting the stability operator in the Gaussian density to the linearized operator in (\ref{SSEqToru_linearization}) can be useful for the analysis. The general identity on any surface in $\Reals^3$ is stated in the following lemma.

\begin{lemma}\label{Conjug}
Let $\Sigma^2\subseteq\Reals^3$ be a smooth self-shrinker. Then the following conjugation identity holds for the stability operator on $\Sigma$:
\beq
\mathcal{L}=e^{\frac{|X|^2}{8}}\Big(\Delta +|A|^2-\frac{|X|^2+|X^\perp|^2}{16}+1\Big)e^{\frac{-|X|^2}{8}}.
\eeq
\end{lemma}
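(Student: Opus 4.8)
Here is the approach I would take. The identity is a purely local computation on $\Sigma$, and the only geometric inputs are the Beltrami relation $\Delta_\Sigma X=\vec{H}$ and the self-shrinker equation (\ref{SSEq}). The plan is first to record the standard conjugation formula: for any $w\in C^\infty(\Sigma)$ and $u\in C^2(\Sigma)$,
\beq\label{ConjGen}
e^{w}\Delta\big(e^{-w}u\big)=\Delta u-2\langle\nabla w,\nabla u\rangle+\big(|\nabla w|^2-\Delta w\big)u,
\eeq
which follows from $\nabla(e^{-w}u)=e^{-w}(\nabla u-u\nabla w)$ by expanding $\diverg\big(e^{-w}(\nabla u-u\nabla w)\big)$ with the rule $\diverg(e^{-w}Z)=e^{-w}\big(\diverg Z-\langle\nabla w,Z\rangle\big)$.

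Next I would specialize to $w=\tfrac18|X|^2$. Since the $\Sigma$-gradient of $|X|^2$ is $2X^{\top}$, the tangential part of the position vector, one gets $\nabla w=\tfrac14 X^{\top}$, hence $|\nabla w|^2=\tfrac1{16}|X^{\top}|^2$ and $\langle\nabla w,\nabla u\rangle=\tfrac14\langle X,\nabla u\rangle$ (the last because $\nabla u$ is tangent to $\Sigma$). For $\Delta w$ I would use $\Delta_\Sigma|X|^2=2|\nabla_\Sigma X|^2+2\langle X,\Delta_\Sigma X\rangle=2n+2\langle X,\vec{H}\rangle$ with $n=\dim\Sigma=2$. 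In codimension one $X^{\perp}=\langle X,\nu\rangle\,\nu$, so $|X^{\perp}|^2=\langle X,\nu\rangle^2$; combining the sign convention of (\ref{SSEq}), under which $\vec{H}=-H\nu$, with $H=\tfrac12\langle X,\nu\rangle$, one finds $\langle X,\vec{H}\rangle=-H\langle X,\nu\rangle=-2H^2=-\tfrac12|X^{\perp}|^2$. Hence $\Delta_\Sigma|X|^2=4-|X^{\perp}|^2$, i.e. $\Delta w=\tfrac12-\tfrac18|X^{\perp}|^2$, and using $|X|^2=|X^{\top}|^2+|X^{\perp}|^2$,
\beq\label{ConjQ}
|\nabla w|^2-\Delta w=\frac{|X^{\top}|^2}{16}+\frac{|X^{\perp}|^2}{8}-\frac12=\frac{|X|^2+|X^{\perp}|^2}{16}-\frac12 .
\eeq

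Finally, plugging (\ref{ConjQ}) into (\ref{ConjGen}) and noting that the zeroth-order terms $|A|^2$, $-\tfrac1{16}(|X|^2+|X^{\perp}|^2)$ and $1$ commute with multiplication by $e^{\pm w}$, I would obtain
\beq
e^{w}\Big(\Delta+|A|^2-\tfrac{|X|^2+|X^{\perp}|^2}{16}+1\Big)e^{-w}u=\Delta u+|A|^2u-\tfrac12\langle X,\nabla u\rangle+\tfrac12 u,
\eeq
where the two copies of $\tfrac1{16}(|X|^2+|X^{\perp}|^2)$ cancel. By (\ref{SSEq_linearization}) the right-hand side is exactly $\mathcal{L}_\Sigma u$, which proves the lemma. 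I expect the only step requiring genuine care to be the computation of $\Delta w$: one must correctly track the sign of $\langle X,\vec{H}\rangle$ relative to $H$ and invoke the self-shrinker equation to turn $H^2$ into $\tfrac14|X^{\perp}|^2$; after that the $|X|^2+|X^{\perp}|^2$ terms cancel and nothing else is delicate.
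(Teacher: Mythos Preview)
Your proof is correct and is precisely the ``elementary computation'' the paper alludes to; the paper's own proof consists of a single sentence citing Appendix~C of \cite{KKM} together with the observation $|X^\perp|=2|H|$, while you have written out the conjugation formula $e^{w}\Delta(e^{-w}u)=\Delta u-2\langle\nabla w,\nabla u\rangle+(|\nabla w|^2-\Delta w)u$ and computed $\nabla w$, $\Delta w$ for $w=\tfrac18|X|^2$ explicitly. The only remark is that the sign bookkeeping you flag as delicate can be bypassed by using the invariant form of the self-shrinker equation, $\vec H+\tfrac12 X^\perp=0$, which gives $\langle X,\vec H\rangle=-\tfrac12|X^\perp|^2$ directly without reference to a choice of $\nu$.
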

\begin{proof}[Proof of Lemma \ref{Conjug}]
By an elementary computation using Appendix C in \cite{KKM} and noting that $|X^\perp|=2|H|$.
\end{proof}

We let $\omega=(x')^2+(y')^2$ and express functions on the surface of rotation in coordinates as $v=v(t,\theta)$ and get for the intrinsic Laplacian that
\[
\Delta = \frac{1}{y\sqrt{\omega}}\frac{\partial}{\partial t}\Big(\frac{y}{\sqrt{\omega}}\frac{\partial}{\partial t}\Big)+\frac{1}{y^2}\frac{\partial^2}{\partial\theta^2}.
\]
The square of the second fundamental form is
\begin{align}
|A|^2&=\frac{1}{\omega^3}\left[(x''y'-y''x')^2+\frac{(x')^2\omega^2}{y^2}\right]\\
& = \frac{1}{\omega}\left[\left(\frac{yx'-xy'}{2}-\frac{x'}{y}\right)^2+\frac{(x')^2}{y^2}\right],
\end{align}
Recall that
\[
K_{e^{2w}g_0}=e^{-2\omega}\left(-\Delta_{g_0}\omega+K_{g_0}\right),
\]

so that the Gau\ss{} curvature of Angenent's metric is
\[
K_{\textrm{Ang}} = \frac{e^{\fracsm{x^2+y^2}{2}}}{y^2}\left(1+\frac{1}{y^2}\right)
\]

The remaining terms in the expression for the stability operator give:
\[
-\fracsm{1}{2}X\cdot \grad v +\fracsm{1}{2}v= -\fracsm{1}{2}\frac{xx'+yy'}{\omega}\frac{\partial v}{\partial t}+\fracsm{1}{2}v.
\]

By virtue of rotational symmetry, the equation $\mathcal{L}v=0$ separates, and we expand $v$ by its Fourier series on each radial circle:
\beq\label{FourierModes}
v(t,\theta)=\sum_{m\in\mathbb{Z}} v_m(t)e^{im\theta},
\eeq
and thus the equations we study are
\beq
\mathcal{L}_mv_m=0,
\eeq
for the appropriate boundary conditions (e.g. Dirichlet or Neumann), where
\beq
\begin{split}
\mathcal{L}_mv_m=&\frac{1}{y\sqrt{\omega}}\frac{\partial}{\partial t}\Big(\frac{y}{\sqrt{\omega}}\frac{\partial}{\partial t}\Big)v_m-\fracsm{1}{2}\frac{xx'+yy'}{\omega}v_m'+\Big(|A|^2+\fracsm{1}{2}-\frac{m^2}{y^2}\Big)v_m.
\end{split}
\eeq

Observing how the size of the 0th order coefficient improves with increasing symmetry, we thus immediately conclude the following proposition:
\begin{proposition}\label{MaxPrinc}
On the compact surface of revolution $S_\gamma$ with boundary, generated by the curve $\gamma$, we let:
\beq
M_0(\gamma):=\sup_{p\in\gamma}\Big[y(p)\sqrt{\fracsm{1}{2}+|A(p)|^2}\Big].
\eeq
Then the unique solutions to each of the above Sturm-Liouville problems (with the above-mentioned appropriate boundary condition for each case) are:
\beq
v_m\equiv0,\quad\textrm{for all}\quad m\geq M_0(\gamma).
\eeq
\end{proposition}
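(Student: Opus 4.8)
The plan is a one-dimensional energy (Sturm--Liouville) argument whose whole point is that for $m\geq M_0(\gamma)$ the zeroth-order coefficient of $\mathcal L_m$ is $\leq 0$ everywhere on $\gamma$. Parametrize $\gamma$ by $t\in[t_1,t_2]$. First I would put $\mathcal L_m$ into self-adjoint form: restricting to the $m$-th Fourier mode the self-adjointness of $\mathcal L$ with respect to the Gaussian weight $e^{-|X|^2/4}\,dA$ (equivalently a short direct computation, using $|X|^2=x^2+y^2$ on $S_\gamma$, or Lemma \ref{Conjug}) gives
\[
\mathcal L_m v=\tfrac{1}{\rho}\big(p\,v'\big)'+q_m\,v,\qquad p=e^{-|X|^2/4}\tfrac{y}{\sqrt\omega}>0,\quad \rho=e^{-|X|^2/4}\,y\sqrt\omega>0,\quad q_m=|A|^2+\tfrac12-\tfrac{m^2}{y^2}.
\]
By the very definition of $M_0(\gamma)$ one has $y^2\big(\tfrac12+|A|^2\big)\leq M_0(\gamma)^2$ pointwise on $\gamma$, hence $q_m\leq (M_0(\gamma)^2-m^2)/y^2\leq 0$ on all of $\gamma$ as soon as $m\geq M_0(\gamma)$, with strict inequality everywhere when $m>M_0(\gamma)$; note also $M_0(\gamma)>0$, so we are dealing with $m\geq 1$.

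Next, for any solution $v_m$ of $\mathcal L_m v_m=0$ with the stated (Dirichlet, Neumann, or mixed) boundary data, I would multiply by $\rho v_m$, integrate over $[t_1,t_2]$, and integrate by parts, obtaining
\[
0=\big[p\,v_m'\,v_m\big]_{t_1}^{t_2}-\int_{t_1}^{t_2}p\,(v_m')^2\,dt+\int_{t_1}^{t_2}q_m\,\rho\,v_m^2\,dt.
\]
The boundary term drops at every endpoint: Dirichlet endpoints contribute $v_m=0$; at a Neumann endpoint on $\{x_1=0\}$ the conormal derivative equals $\pm v_m'$ and vanishes; and at an axis point of $\gamma$ (present for the spherical caps) smoothness of $v_m(t)e^{im\theta}$ on $S_\gamma$ forces $v_m$ to vanish there to order $|m|\geq 1$, so $p\,v_m'v_m\to 0$ while the two integrals stay finite (near such a point $p,\rho=O(y)$, $v_m=O(y^{|m|})$, $q_m=O(y^{-2})$). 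Since $p>0$ and $q_m\leq 0$, the two remaining integrals are each $\leq 0$ and sum to $0$; therefore $v_m'\equiv 0$, i.e.\ $v_m$ is a constant.

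To finish: whenever some boundary component is Dirichlet, or $\gamma$ meets the axis, this constant is forced to be $0$ — which already handles $S_1,S_3,\dots,S_6$. For the one purely-Neumann, axis-free piece, $S_2=\mathbb T^2\cap\{x_1\geq0\}$, plugging $v_m\equiv c$ back into $\mathcal L_m v_m=0$ gives $q_m c\equiv 0$, and since $y\sqrt{\tfrac12+|A|^2}$ is non-constant along the generating curve of Angenent's non-round torus we get $q_m\not\equiv 0$, hence $c=0$ (for $m>M_0(\gamma)$ this is automatic, since then $q_m<0$ everywhere). Thus $v_m\equiv 0$ in every case, proving the Proposition. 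As a payoff, imposing $N$-fold rotational symmetry with $N\geq\max_i M_0(\gamma_i)$ confines admissible functions to Fourier modes $m\in N\mathbb Z$, so the Proposition kills every mode with $m\neq 0$; together with the quantitative $m=0$ (Jacobi-field) analysis of this section, this gives Theorem \ref{TechThm}.

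I do not expect a genuine obstacle: once the sign $q_m\leq 0$ is recorded, the rest is bookkeeping, and the real work of the paper lies in the $m=0$ estimates rather than here. The two spots that want a little care are the indicial analysis at the axis points of the spherical caps (where $p$ and $\rho$ degenerate, so one must verify both the vanishing of the boundary term and the convergence of the energy integrals) and the exclusion of the degenerate borderline possibility $q_m\equiv 0$ when $m=M_0(\gamma)$ with pure Neumann data — both handled as indicated above.
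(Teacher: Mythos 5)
Your proof is correct, but it proceeds by a different elementary route than the paper: you write $\mathcal L_m$ in Sturm--Liouville form with the Gaussian-weighted density, multiply by $\rho v_m$, integrate by parts, and use $q_m\leq 0$ for $m\geq M_0(\gamma)$ to kill both terms. The paper instead simply invokes ``the usual maximum principle,'' i.e.\ the sign condition $q_m\leq 0$ makes $\omega\mathcal L_m$ an operator of the form $v''+b\,v'+c\,v$ with $c\leq 0$, so the strong maximum principle forces any solution to attain its (non-negative) extremum on the boundary, which Dirichlet data, Neumann data via the Hopf lemma, or the forced vanishing $v_m=O(y^{|m|})$ at axis points then rule out. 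The two arguments encode the same mechanism (the sign of the zeroth-order coefficient once the $-m^2/y^2$ contribution dominates) and are of comparable length; what your energy version buys is that it makes explicit two points the paper's one-line invocation glosses over, namely the indicial behaviour at the rotation axis (where $p$, $\rho$ degenerate and one must check both the vanishing of the boundary term and the integrability of the energy) and the borderline pure-Neumann case $m=M_0(\gamma)$ on $S_2$, where the conclusion uses $q_m\not\equiv 0$, i.e.\ that $y\sqrt{\tfrac12+|A|^2}$ is non-constant along Angenent's generating curve. Both of those points would also need to be checked under the maximum-principle route, so the slight extra care in your write-up is a virtue rather than a detour.
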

\begin{proof}
This follows immediately from an application of the usual maximum principle in one variable.
\end{proof}

The usage of the preceding proposition is now via the assumption of $k$-dihedral symmetry, which is built into the entire construction. It then follows, that only such Fourier modes $m$ in the separation Equation (\ref{FourierModes}) satisfying $k\mid m$ may appear in the decomposition. Hence by restricting to large enough values of $k$,
\beq
k\geq \max_i\:M_0(\gamma_i),
\eeq
and correspondingly large genus $g$, then by virtue of Proposition \ref{MaxPrinc}, there can be assumed to be no $m>0$ modes in the decomposition. Note of course, that since $m=0$ modes are $k$-symmetric for any $k$ (i.e. amounting to the tautology $k\mid 0)$, the argument does not apply to $m=0$, and presence of this mode must be considered separately.

We now therefore focus solely on the $m=0$ mode. Recalling the second variation formula for the energy of a curve, we get:
\begin{align}
\frac{d^2}{ds^2}A(\Sigma)&=\frac{d^2}{ds^2}\int_\Sigma e^{-\frac{|X|^2}{4}}dA_{\Reals^3}\\
&=-\int (f\mathcal{L} f)e^{\frac{|X|^2}{4}}dA_{\Reals^3}\\
&=\frac{d^2}{dv^2}\left(\mathrm{vol}(\mathbb{S}^{n-1})\mathrm{Length}_{\mathrm{Ang}}\right)\\
&=\mathrm{vol}(\mathbb{S}^{n-1})\int \left\langle-\frac{DV}{dt}-R(\gamma',V)\gamma',V\right\rangle dt\\
&=-\mathrm{vol}(\mathbb{S}^{n-1})\int_a^b g(g''+K_{\mathrm{Ang}}g)dt,
\end{align}
where $dA_{\Reals^3}$ means the surface measure induced by the standard Euclidean metric.

Note that we can therefore rewrite the $m=0$ equation in terms of $\tilde{v}_0=\sqrt{\omega}v_0$, if we simultaneously parametrize by unit length w.r.t. to the Angenent half-plane metric, to obtain:
\beq\label{Jacobi}
\tilde{v}_0''+K_\textrm{Ang} \tilde{v}_0=\frac{1}{\sqrt\omega}\mathcal{L}_0v_0=0.
\eeq
That is, of course, the well-known Jacobi equation for the geodesic $\gamma$. Since $\sqrt{\omega}>0$, Dirichlet conditions for $v_0$ correspond exactly to Dirichlet conditions for $\tilde{v}_0$. Note that since, by symmetry,
\beq
\frac{\partial\sqrt{\omega}}{\partial t}(t_0)=0,\quad\textrm{when}\quad \gamma(t_0)\in\{x_1=0\},
\eeq
imposing Neumann conditions on $\{x_1=0\}$ is also equivalent for $v_0$ and $\tilde{v}_0$.

\begin{figure}
\centering
\includegraphics[width=0.9\textwidth]{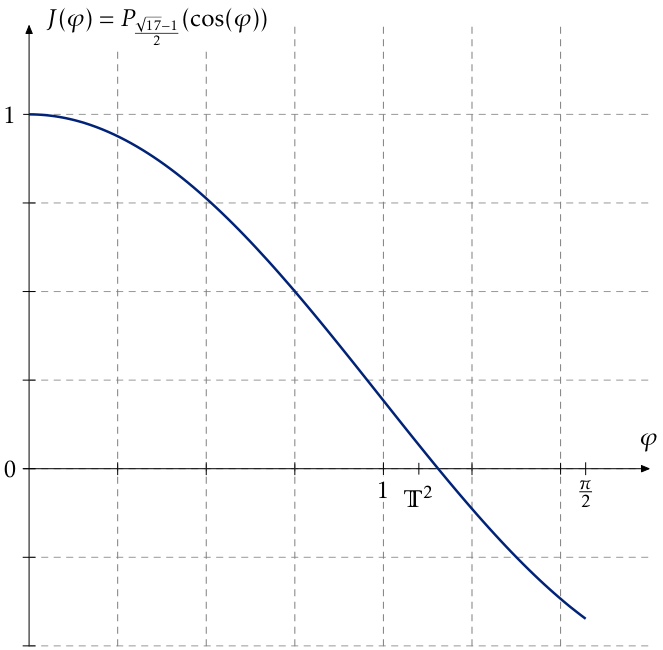}
\caption{Jacobi field $J$ on $S_3\subseteq\mathbb{S}^2$, with Neumann condition on $\{x=0\}$. Drawn w.r.t. polar coordinates around $(0,1)$, where $\varphi$ is the angle out from the $x_2$-axis in Figure \ref{TorusAndSphere}. Point of intersection with $\mathbb{T}^2$ indicated.}
\end{figure}

\begin{figure}
\centering
\includegraphics[width=0.9\textwidth]{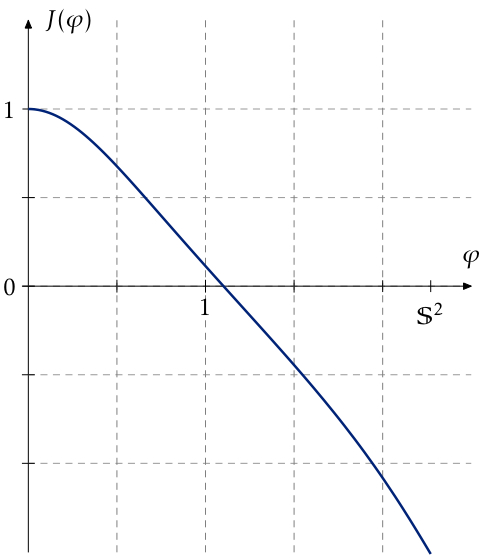}
\caption{Jacobi field $J$ on $S_5\subseteq\mathbb{T}^2$, with Neumann condition on $\{x=0\}$. Drawn w.r.t. polar coordinates around $(0,1)$, where $\varphi$ is the angle out from the $x_2$-axis in Figure \ref{TorusAndSphere}. Point of intersection with $\mathbb{S}^2$ indicated.}
\end{figure}

\begin{figure}
\centering
\includegraphics[width=0.9\textwidth]{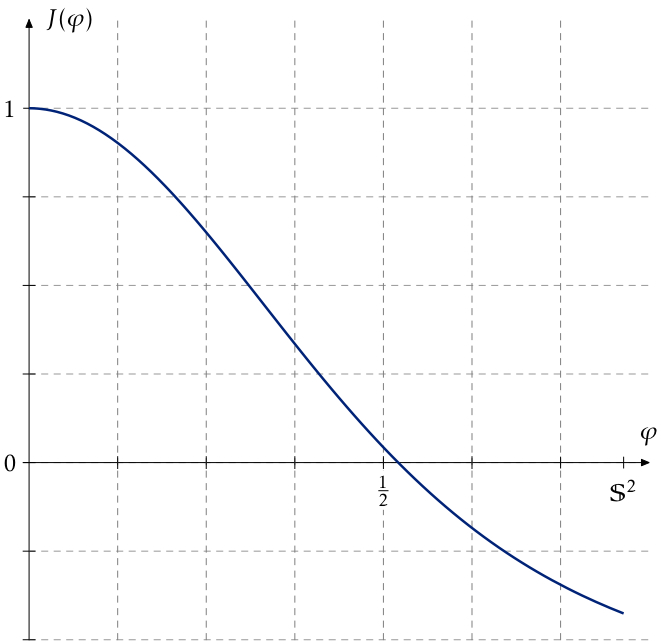}
\caption{Jacobi field $J$ on $S_6\subseteq\mathbb{T}^2$, with Neumann condition on $\{x=0\}$. Drawn w.r.t. polar coordinates around $(0,1)$, where $\varphi$ is the angle out from the $x_2$-axis in Figure \ref{TorusAndSphere}. Point of intersection with $\mathbb{S}^2$ indicated.}
\end{figure}

\begin{lemma}\label{GraphJacobi}
For graphs of the form $(x_1,u(x_1))$ the operator $\mathcal{L}_0$ specializes to $\omega=1+(u')^2$, and
\begin{align*}
\omega\mathcal{L}_0v
&=v''-\frac{x_1}{2}\left(1+(u')^2\right)v'+\left[\left(\frac{u-x_1u'}{2}-\frac{1}{u}\right)^2+\frac{1}{u^2}+\frac{1+(u')^2}{2}\right]v,\\
&=:v''+P(x_1,u,u')v'+Q(x_1,u,u')v,
\end{align*}
when $u$ is a solution to the shrinker equation.

For solution graphs of the form $(f(x_2),x_2)$ we have $\omega=1+(f')^2$ and the formula is:
\begin{align*}
\omega\mathcal{L}_0g&=g''+\left[\frac{1}{x_2}-\frac{x_2}{2}\right]\left(1+(f')^2\right)g'+\left[\left(\frac{x_2f'-f}{2}-\frac{f'}{x_2}\right)^2+\frac{(f')^2}{x_2^2}+\frac{1+(f')^2}{2}\right]g\\
&=:g''+R(x_2,f,f')g'+S(x_2,f,f')g.
\end{align*}
\end{lemma}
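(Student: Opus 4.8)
The plan is to obtain both formulae by substituting the two graph parametrizations into the $m=0$ case of the separated operator derived above, namely
\beq
\mathcal{L}_0 v = \frac{1}{y\sqrt{\omega}}\frac{d}{dt}\Big(\frac{y}{\sqrt{\omega}}\,v'\Big) - \fracsm{1}{2}\frac{xx'+yy'}{\omega}\,v' + \Big(|A|^2+\fracsm{1}{2}\Big)v ,
\eeq
and then using the self-shrinker ODE once to clean up the first-order coefficient.

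First I would treat the $x_1$-graph. Parametrize the generating curve by $t=x_1$, so $(x,y)=(x_1,u(x_1))$, $x'=1$, $y'=u'$, $\omega=1+(u')^2$, and $u>0$ on the relevant arcs so all divisions are legitimate. Expanding the divergence-form term gives $\frac{1}{\omega}v''+\big(\frac{u'}{u\omega}-\frac{u'u''}{\omega^2}\big)v'$, and the drift term contributes $-\frac{1}{2}\frac{x_1+uu'}{\omega}v'$, so multiplying the operator through by $\omega$ yields
\beq
\omega\mathcal{L}_0 v = v'' + \Big(\frac{u'}{u}-\frac{u'u''}{\omega}-\frac{x_1+uu'}{2}\Big)v' + \omega\Big(|A|^2+\fracsm{1}{2}\Big)v .
\eeq
Now invoke the graph form of the shrinker equation, $u''=\mathcal{M}_1(u,u')=\omega\big[\frac{x_1u'-u}{2}+\frac{1}{u}\big]$, to replace $u''/\omega$ in the first-order coefficient; the $\frac{u'}{u}$ and $\frac{uu'}{2}$ terms cancel and what remains is exactly $-\frac{x_1}{2}(1+(u')^2)$, the claimed $P$. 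For the zeroth-order term I insert the already-simplified surface-of-revolution expression $|A|^2=\frac{1}{\omega}\big[(\frac{u-x_1u'}{2}-\frac{1}{u})^2+\frac{1}{u^2}\big]$ (with $x'=1$, $y'=u'$) and multiply by $\omega$, which produces precisely $Q$.

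The $x_2$-graph case is structurally identical: parametrize by $t=x_2$ with $(x,y)=(f(x_2),x_2)$, so $x'=f'$, $y'=1$, $\omega=1+(f')^2$; the same expansion gives first-order coefficient $\frac{1}{x_2}-\frac{f'f''}{\omega}-\frac{ff'+x_2}{2}$ and zeroth-order coefficient $\omega(|A|^2+\frac12)$, and substituting the second shrinker ODE $f''=\mathcal{M}_2(f,f')=\omega\big[(\frac{x_2}{2}-\frac{1}{x_2})f'-\frac{f}{2}\big]$ cancels the $\frac{ff'}{2}$ terms and collapses the drift coefficient to $(\frac{1}{x_2}-\frac{x_2}{2})(1+(f')^2)=R$, while the curvature term gives $S$.

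Since every step is an algebraic identity there is no genuine obstacle; the only point deserving attention is that the $u''$ (resp. $f''$) appearing in the expanded Laplacian must be removed via the shrinker equation — this cancellation is precisely why the stated formula holds only ``when $u$ is a solution,'' and it is what makes $\omega\mathcal{L}_0$ a bona fide Sturm--Liouville operator in $u,u'$ (resp. $f,f'$) with no second-derivative term in its coefficients. One should also note that each parametrization is valid only where $\gamma$ is a graph over the respective axis, the switch between them occurring where $\gamma$ is vertical (resp. horizontal), which is exactly where the two formulae are matched.
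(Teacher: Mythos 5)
Your proposal is correct, and it fills in a proof that the paper itself leaves implicit (Lemma \ref{GraphJacobi} is stated without a proof environment). The computation you carry out is the natural and, as far as one can tell, intended one: substitute $(x,y)=(x_1,u(x_1))$ (resp.\ $(f(x_2),x_2)$) into the separated operator $\mathcal{L}_0$, expand the $\frac{1}{y\sqrt{\omega}}\partial_t(\frac{y}{\sqrt{\omega}}\partial_t)$ term to expose a $u''$ (resp.\ $f''$) in the first-order coefficient, eliminate it via $u''=\mathcal{M}_1(u,u')$ (resp.\ $f''=\mathcal{M}_2(f,f')$), and plug in the already shrinker-reduced form of $|A|^2$ from the surrounding discussion. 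I verified the cancellations you describe in both cases: in the $x_1$-graph the $\frac{u'}{u}$ and $\frac{uu'}{2}$ terms drop out, yielding $P=-\frac{x_1}{2}(1+(u')^2)$; in the $x_2$-graph the $\frac{ff'}{2}$ terms drop out, yielding $R=(\frac{1}{x_2}-\frac{x_2}{2})(1+(f')^2)$, while the zeroth-order coefficients come directly from $\omega(|A|^2+\frac{1}{2})$. Your concluding remark — that the shrinker ODE is exactly what removes the second-derivative dependence and hence is why the identity is asserted only on solutions — is the right observation, and you correctly flag the domain-of-validity / matching issue at the transition where the profile curve ceases to be a graph over one axis.
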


Let us as a preliminary consideration note that the mean curvature $H$ has rotational symmetry, and is an eigenfunction with eigenvalue $1$ (see \cite{CM2}):
\beq\label{SturmLi}
\mathcal{L}H=H,
\eeq
and Neumann conditions on $\{x_1=0\}$. The profile of $\mathbb{T}^2$ is convex as shown in \cite{KM}, and thus since the sign of the mean curvature changes only at points of tangential contact with a straight line from the origin, we see that on $\mathbb{T}^2$ the function $H\circ \gamma$ has exactly one zero.

By Sturm-Liouville theory, we now conclude from (\ref{SturmLi}) that a solution to the Neumann problem for $\mathcal{L}_0$, if it exists, needs to have at least two zeros in the interval. However, it of course turns out there are no such non-trivial fields with Neumann conditions, which is what we now will apply more detailed analysis to show.

\subsubsection{Surfaces contained in $\mathbb{S}^2$}
For surfaces contained in $\mathbb{S}^2$ it is of natural convenience to use polar coordinates. Recall that a curve $(\rho,\varphi), \varphi = \arctan (x/ z)$ in the $x z$-plane generated by a function $\rho(\varphi)$  that generates a smooth solution to the self-shrinker equation (\ref{SSEqToru}) satisfies:
\beq
\rho''(\varphi)=\frac{1}{\rho}\left\{\rho^2+2(\rho')^2+\Big[1-\frac{\rho^2}{2}-\frac{\rho'}{\rho\tan\varphi}\Big]\big(\rho^2+(\rho')^2\big)\right\}.
\eeq

A function $w$ giving a (via a unit normal w.r.t. Euclidean length) variation field, must thus on $\mathbb{S}^2$ satisfy the equation (see Appendix A in \cite{KKM})
\beq\label{PolLin}
w''+\frac{1}{\tan\varphi}w'+4w=0,
\eeq
with appropriate boundary conditions. The substitution $x=\cos(\varphi)$ in (\ref{PolLin}) gives Legendre's differential equation, and the solution is:
\[
w(\varphi)=C_1 P_{l_0}(\cos \varphi)+C_2Q_{l_0}(\cos\varphi),
\]
where $P_l$ and $Q_l$ are respectively the Legendre functions of the first and second kind, and $l_0=(\sqrt{17}-1)/2$ is the positive root of $l(l+1)=4$.

For the surface $S_1\subseteq \mathbb{S}^2$, which is generated by rotation of the radius 2 quarter-circle, the boundary conditions in the theorem are $w'(0)=0$ and $w'(\fracsm{\pi}{2})=0$. But since $Q_{l_0}(\cos\varphi)$ has a pole at $\varphi=0$, we see that $C_2=0$. Thus, if we normalize $w$ so that $C_1=1$, we have by the first condition that $w=P_{l_0}(\cos \varphi)$. However,
\beq
\frac{dP_{l_0}(\cos\varphi)}{d\varphi}_{\mid \varphi=\fracsm{\pi}{2}}=\frac{\sqrt{\pi}}{2}\frac{\sqrt{17}+1}{\Gamma\left(\frac{1-\sqrt{17}}{4}\right)\Gamma\left(\frac{5+\sqrt{17}}{4}\right)}\neq 0,
\eeq
and hence there is no such Neumann mode. By the preceding, we therefore conclude that under $N$-fold symmetry, for a large enough $N>0$,
\beq
\ker\mathcal{L}_{S_1}=\{0\},\quad\textrm{on}\quad S_1=\mathbb{S}^2\cap\{x_1\geq 0\}\quad \textrm{[Neumann conditions]}.
\eeq

As for the surface $S_3\subseteq\mathbb{S}^2$, which we let for definiteness be the component such that $(2,0)\in S_3$, again $C_1=1$ and $C_2=0$. Now, since by (\ref{SphereCrossAngle}),
\beq
P_{l_0}(\cos(\angle(\vec{e_1},p^+)))\geq P_{l_0}(\cos(\fracsm{11}{10}+\fracsm{1}{20}))\geq \fracsm{1}{200}>0,
\eeq
we conclude again
\beq
\ker\mathcal{L}_{S_3}=\{0\}\quad\textrm{[Dirichlet conditions]}.
\eeq

For $S_4\subseteq\mathbb{S}^2$, the component with $(0,2)\in S_4$, we see that with $w'(\fracsm{\pi}{2})=0$ and fixing $w(\fracsm{\pi}{2})=1$,
\[
\begin{pmatrix}
C_1\\
C_2
\end{pmatrix}
=
\begin{pmatrix}
P_{l_0}(\cos(\fracsm{\pi}{2})) & Q_{l_0}(\cos(\fracsm{\pi}{2}))\\
\fracsm{d}{d\varphi}P_{l_0}(\cos(\fracsm{\pi}{2})) & \fracsm{d}{d\varphi} P_{l_0}(\cos(\fracsm{\pi}{2}))
\end{pmatrix}^{-1}
\begin{pmatrix}
1\\0
\end{pmatrix}
\]
such that with these constants
\[
w(\angle(\vec{e_1},p^+))\geq w(\fracsm{11}{10})-100\varepsilon_{\textrm{gap}}>\fracsm{1}{2}>0.
\]
Thus on the surface $S_4$ with Neumann and Dirichlet conditions as in the statement of the theorem, we also conclude
\beq
\ker\mathcal{L}_{S_4}=\{0\}\quad\textrm{[Neumann on $\{x_1=0\}$, Dirichlet on $S_4\cap\mathbb{T}^2$]}.
\eeq

\subsubsection{Surfaces Contained in $\mathbb{T}^2$}
In this section we will show that the Jacobi fields with Neumann conditions $u_0'(0)=0$ (and normalized to $u_0(0)=1$), propagated from respectively the top and the bottom of the torus $\mathbb{T}^2$ from Section \ref{sec:torus} have the following end-point values at the point where $\mathbb{T}^2$ intersects the round 2-sphere of radius $2$:
\beq\label{topmatrix}
\begin{pmatrix}
u_0(t_\textrm{top})\\
u_0'(t_\textrm{top})
\end{pmatrix}
=\begin{pmatrix}
-\fracsm{22}{50}\pm 10\varepsilon_{\mathrm{gap}}\\
\null \\
-\fracsm{37}{50}\pm 10\varepsilon_{\mathrm{gap}}
\end{pmatrix}
\eeq
and
\beq\label{botmatrix}
\begin{pmatrix}
u_0(t_\textrm{bot})\\
u_0'(t_\textrm{bot})
\end{pmatrix}
=
\begin{pmatrix}
-\fracsm{77}{50}\pm 10\varepsilon_{\mathrm{gap}}\\
\null \\
-\fracsm{84}{50}\pm 10\varepsilon_{\mathrm{gap}},
\end{pmatrix}
\eeq
where the notation is meant to imply that each component is contained in the corresponding intervals arising from both choices of sign.

This means firstly that the Dirichlet problems on each part (w/ Neumann conditions on $\mathcal{P}$) have trivial kernel, that is:
\begin{align}
\ker\mathcal{L}_{S_5}=\{0\}\quad\textrm{[Neumann on $\{x_1=0\}$, Dirichlet on $S_5\cap\mathbb{S}^2$]},\\
\ker\mathcal{L}_{S_6}=\{0\}\quad\textrm{[Neumann on $\{x_1=0\}$, Dirichlet on $S_6\cap\mathbb{S}^2$]}.
\end{align}

Secondly, note that non-triviality of the kernel of $\mathcal{L}$ on $\mathbb{T}^2\cap\{x_1\geq0\}$ with Neumann conditions has now been reduced to the conditions
\begin{align*}
&\alpha u_0(t_\textrm{top}) =\beta u_0(t_\textrm{top}),\\
&\alpha u_0'(t_\textrm{top}) =-\beta u_0'(t_\textrm{bot}),
\end{align*}
for a non-zero pair $(\alpha,\beta)$ or in other words, singularity of the matrix $\mathcal{N}$:
\beq
\mathcal{N}:=
\begin{pmatrix}
u_0(t_\textrm{top}) & -u_0(t_\textrm{bot})\\
u_0'(t_\textrm{top}) & u_0'(t_\textrm{bot})
\end{pmatrix}.
\eeq
But in fact from (\ref{topmatrix})--(\ref{botmatrix}) we see
\beq
\det\mathcal{N}\geq \fracsm{9}{5} >0,
\eeq
so we finally conclude that also
\[
\ker\mathcal{L}_{S_2}=\{0\}\quad \textrm{[Neumann on $\{x_1=0\}$}.
\]

To show the required estimates of the Jacobi fields, we consider first an approximate solution $V$ to the linearized equation on the approximate curve $\Gamma$ from the previous section.

\underline{Top: $x_1$-graph (Jacobi Equation)}

Let us consider the first part of $\Gamma$, graphical over the $x_1$-axis, on $x_1\in [0,\fracsm{3}{5}]$. Here, we have the expressions (recall Lemma \ref{GraphJacobi} for the definition of $P$ an $Q$).
\begin{align*}
&\left|\partial_2P(x_1,\xi,U')\right|=0,\\
&\left|\partial_3P(x_1,u,\xi')\right|=|x_1 \xi'|,\\
&\left|\partial_2Q(x_1,\xi,U')\right|=2\left|\left(\frac{\xi-x_1U'}{2}-\frac{1}{\xi}\right)\left(\frac{1}{2}+\frac{1}{\xi^2}\right)-\frac{1}{\xi^3}\right|,\\
&\left|\partial_3Q(x_1,u,\xi')\right|=\left|x_1\left(\frac{u-x_1\xi'}{2}-\frac{1}{u}\right)-\xi'\right|.
\end{align*}

Assume for a small $\delta_1^T>0$ the uniform bounds:
\begin{align}
&\left|V''+P(x_1,U,U')V'+Q(x_1,U,U')V\right|\leq \delta_1^T,\\
&\left|\partial_3P(x_1,u,\xi')\right||v'|+\left|\partial_3Q(x_1,u,\xi')\right||v|\leq x_1(2x_1^2+3),\\
&\left|\partial_2 Q(x_1,\xi,U')\right||v|\leq \fracsm{8}{5}-\fracsm{3}{2}x_1^2.
\end{align}

We let $\Phi(x_1):=|V'(x_1)-v'(x_1)|$ and estimate: 
\begin{align*}
\Phi'(x_1)&\leq \left||V'-v'|'\right|\overset{\textrm{a.e}}{=}|V''-v''|\\
&\leq \left|P(x_1,U,U')V' - P(x_1,u,u')v'\right|+\left|Q(x_1,U,U')V- Q(x_1,u,u')v\right|+\delta_1^T\\
&\leq \left|P(x_1,U,U')\right||V' -v'|+\left|P(x_1,U,U') - P(x_1,u,u')\right||v'|\\
&\qquad+\left|Q(x_1,U,U')\right||V -v|+\left|Q(x_1,U,U') - Q(x_1,u,u')\right||v|+\delta_1^T\\
&\leq \left|P(x_1,U,U')\right||V' -v'|+\left|Q(x_1,U,U')\right||V -v|\\
&\qquad +\left(\left|\partial_3P(x_1,u,\xi')\right||v'|+\left|\partial_3Q(x_1,u,\xi')\right||v|\right)|U'-u'|\\
&\qquad +\left(\cancel{\left|\partial_2 P(x_1,\xi,U')\right|}|v'|+\left|\partial_2 Q(x_1,\xi,U')\right||v|\right)|U-u|+\delta_1^T\\
&\leq \left|P(x_1,U,U')\right|\Phi(x_1) + \left|Q(x_1,U,U')\right|\int_{0}^{x_1}\Phi(s)ds+\left|Q(x_1,U,U')\right||V(0)-v(0)|\\
&\qquad+x_1(2x_1^2+3)|U'(x_1)-u'(x_1)|+(\fracsm{8}{5}-\fracsm{3}{2}x_1^2)|U(x_1)-u(x_1)|+\delta_1^T.
\end{align*}
We integrate on $[0,x_1]$ to get:
\begin{align*}
\Phi(x_1)\leq &\int_0^{x_1}\left[\left|P(s,U,U')\right|+\int_0^{x_1}\left|Q(t,U,U')\right|dt\right]\Phi(s)ds+\left(\int_0^{x_1}\left|Q(s,U,U')\right|ds\right)|V(0)-v(0)|\\
&\quad +\int_0^{x_1}s(2s^2+3)|U'(s)-u'(s)|ds+\int_0^{x_1}(\fracsm{8}{5}-\fracsm{3}{2}s^2)|U(s)-u(s)|ds+\delta^T_1x_1.
\end{align*}

Recall the estimates for $|U'(s)-u'(s)|$, which lead to:
\begin{align*}
\int_0^{x_1}s(2s^2+3)|U'(s)-u'(s)|ds&\leq\left[\fracsm{4}{5}|U(0)-u(0)|+\varepsilon_1^T\right]\int_0^{x_1}s^2(2s^2+3)\exp\Big\{\fracsm{52}{15}s^2\Big\}ds\\
&\leq\fracsm{9}{20}|U(0)-u(0)|+\fracsm{14}{25}\varepsilon^T_1.
\end{align*}
Note also that from the estimates for $|U-u|$ from $|U'-u'|$, we have already once estimated the integral of the latter. We now need the sizes of these elementary Gaussian double integrals:
\begin{align*}
&\int_0^{\fracsm{3}{5}}(\fracsm{8}{5}-\fracsm{3}{2}x_1^2)\left(1+\int_0^{x_1}\fracsm{4s}{5}\exp\Big\{\fracsm{52}{15}s^2\Big\}ds\right)dx_1\leq\frac{23}{25},\\
&\int_0^{\fracsm{3}{5}}(\fracsm{8}{5}-\fracsm{3}{2}x_1^2)\int_0^{x_1}s\exp\Big\{\fracsm{52}{15}s^2\Big\}dsdx_1\leq\frac{7}{100}.
\end{align*}
Thus
\[
\int_0^{\fracsm{3}{5}}(\fracsm{8}{5}-\fracsm{3}{2}s^2)|U(s)-u(s)|ds\leq \fracsm{23}{25}|U(0)-u(0)|+\fracsm{7}{100}\varepsilon^T_1.
\]

Now, we will furthermore assume the following bounds on the test functions, pertaining to the approximation by $\varepsilon$-geodesics:
\begin{align}
&\int_0^{x_1}|P(s,U,U')|ds\leq \int_0^{x_1} s(\fracsm{7}{5}s^2+\fracsm{7}{10})ds\leq \fracsm{7}{20}x_1^2(x_1^2+1),\\
&\int_0^{x_1}\left|Q(s,U,U')\right|ds\leq \int_0^{x_1}(\fracsm{16}{5}s^2 + \fracsm{5}{2})ds\leq \fracsm{16}{15}x_1^3 + \fracsm{5}{2}x_1.
\end{align}

Applying Gr\"{o}nwall-Bellman again, to these new estimates, we see, using also that $V(0)=v(0)=1$ by assumption:
\begin{align*}
\Phi(x_1)\leq &\left[\int_0^{x_1}s(2s^2+3)|U'(s)-u'(s)|ds+\int_0^{x_1}(\fracsm{8}{5}-\fracsm{3}{2}s^2)|U(s)-u(s)|ds+\delta^T_1x_1\right]\times\\
&\qquad\exp\left\{\int_0^{x_1}\left|P(s,U,U')\right|ds+x_1\int_0^{x_1}\left|Q(s,U,U')\right|ds\right\}\\
\leq & \left[\fracsm{137}{100}|U(0)-u(0)|+\fracsm{63}{100}\varepsilon^T_1+\fracsm{3}{5}\delta^T_1\right]\times\\
&\qquad\exp\left\{\fracsm{7}{20}x_1^2(x_1^2+1)+\fracsm{16}{15}x_1^4+\fracsm{5}{2}x_1^2\right\}.
\end{align*}
Thus
\begin{align*}
|V'(\fracsm{3}{5})-v'(\fracsm{3}{5})|\leq \fracsm{23}{5}|U(0)-u(0)|+\fracsm{11}{5}\varepsilon_1^T+\fracsm{51}{25}\delta_1^T,\\
|V(\fracsm{3}{5})-v(\fracsm{3}{5})|\leq \fracsm{13}{10}|U(0)-u(0)|+\fracsm{3}{5}\varepsilon_1^T+\fracsm{11}{50}\delta_1^T.
\end{align*}
Here, the last estimate followed by integrating the estimates above, and using again that $V(0)=v(0)$.

\underline{Top: $x_2$-graph to the sphere (Jacobi Equation)}

We consider the next part of $\Gamma$, graphical over the $x_2$-axis in the region $[y_{\mathbb{S}^2},u_T(3/5)]$ (in the backwards direction). Here, we have the expressions:
\begin{align*}
&\left|\partial_2R(x_2,\xi,F')\right|=0,\\
&\left|\partial_3R(x_2,f,\xi')\right|=2\left|\frac{1}{x_2}-\frac{x_2}{2}\right||\xi'|,\\
&\left|\partial_2S(x_2,\xi,F')\right|=\left|\left(\frac{1}{x_2}-\frac{x_2}{2}\right)F'+\frac{\xi}{2}\right|,\\
&\left|\partial_3S(x_2,f,\xi')\right|=\left|\left(\frac{1}{x_2}-\frac{x_2}{2}\right)(x_2\xi'-f)+\frac{4\xi'}{x_2^2}\right|.
\end{align*}
Assume for a small $\delta_2^T>0$ the uniform bounds:
\begin{align}
&\left|G''+R(x_2,F,F')G'+S(x_2,F,F')G\right|\leq \delta_2^T,\\
&\left|\partial_3R(x_2,u,\xi')\right||g'|+\left|\partial_3S(x_2,f,\xi')\right||g|\leq \fracsm{3}{10}+\fracsm{33}{20}\left(x_2-y_{\mathbb{S}^2}\right)^5 \\
&\left|\partial_2 S(x_2,\xi,U')\right||g|\leq \eta(x_2),
\end{align}
where
\beq
\eta(x_2)=
\begin{cases}
\fracsm{41}{200}-\frac{3}{10}\left(x_2-y_{\mathbb{S}^2}\right)^2,\quad &x_2\in [y_{\mathbb{S}^2},\fracsm{5}{2}],\\
\fracsm{3}{4}-\fracsm{5}{2}\left(x_2-y_{\mathbb{S}^2}\right)+\fracsm{21}{10}\left(x_2-y_{\mathbb{S}^2}\right)^2,\quad &x_2\in [\fracsm{5}{2},u_T(3/5)].
\end{cases}
\eeq

We let $\Psi(x_2):=|G'(x_2)-g'(x_2)|$ and estimate:
\begin{align*}
\Psi'(x_2)&\leq \left||G'-g'|'\right|\overset{\textrm{a.e}}{=}|G''-g''|\\
&\leq \left|R(x_2,F,F')G' - R(x_2,f,f')g'\right|+\left|S(x_2,F,F')G- S(x_2,f,f')g\right|+\delta_2^T\\
&\leq \left|R(x_2,F,F')\right||G' -g'|+\left|R(x_2,F,F') - R(x_2,f,f')\right||g'|\\
&\qquad+\left|S(x_2,F,F')\right||G -g|+\left|S(x_2,F,F') - S(x_2,f,f')\right||g|+\delta_2^T\\
&\leq \left|R(x_2,F,F')\right||G' - g'|+\left|S(x_2,F,F')\right||G - g|\\
&\qquad +\left(\left|\partial_3R(x_2,f,\xi')\right||g'|+\left|\partial_3S(x_2,f,\xi')\right||g|\right)|F'-f'|\\
&\qquad +\left|\partial_2 S(x_2,\xi,F')\right||g||F-f|+\delta_2^T\\
&\leq \left|R(x_2,F,F')\right|\Psi(x_2) + \left|S(x_2,F,F')\right|\int_{u_T(3/5)}^{x_2}\Psi(s)ds+\left|S(x_2,F,F')\right||G(u_T(3/5))-g(u_T(3/5))|\\
&\qquad+\left[\fracsm{3}{10}+\fracsm{33}{20}\left(x_2-y_{\mathbb{S}^2}\right)^5\right]|F'(x_2)-f'(x_2)|+\eta(x_2)|F(x_2)-f(x_2)|+\delta_2^T.
\end{align*}

We integrate on $[x_2,u_T(3/5)]$ to get:
\begin{align*}
\Psi(x_2)\leq &\int_{x_2}^{u_T(3/5)}\left[\left|R(s,F,F')\right|+\int_{x_2}^{u_T(3/5)}\left|S(t,F,F')\right|dt\right]\Psi(s)ds\\
&\quad+\left(\int_{x_2}^{u_T(3/5)}\left|S(s,F,F')\right|ds\right)|G(u_T(3/5))-g(u_T(3/5))|\\
&\quad +\int_{x_2}^{u_T(3/5)}\left[\fracsm{3}{10}+\fracsm{33}{20}\left(s-y_{\mathbb{S}^2}\right)^5\right]|F'(s)-f'(s)|ds+\int_{x_2}^{u_T(3/5)}\eta(s)|F(s)-f(s)|ds\\
&\quad+|G'(u_T(3/5))-g'(u_T(3/5))|+\delta_2^T(u_T(3/5)-x_2).
\end{align*}

Recall, we have above shown the estimates:
\begin{align*}
&|F'(x_2)-f'(x_2)|\leq \Big[(\fracsm{13}{8}-\fracsm{x_2}{2})|F(u_T(3/5))-f(u_T(3/5))|+|F'(u_T(3/5))-f'(u_T(3/5))|\\
&\qquad\qquad\qquad\qquad\qquad+\varepsilon_2^T(u_T(3/5)-x_2)\Big]\exp\left\{\fracsm{7}{4}-\fracsm{9}{10}\left(x_2-y_{\mathbb{S}^2}\right)^2+\left(\fracsm{13}{8}-\fracsm{x_2}{2}\right)\left(u_T(3/5)-x_2\right)\right\},\\
&|F(u_T(3/5))-f(u_T(3/5))|\leq\fracsm{10}{12}\left(\fracsm{9}{25}\varepsilon_1^T+\fracsm{23}{80}|U(0)-u(0)|\right),\\
&|F'(u_T(3/5))-f'(u_T(3/5))|\leq\left(\fracsm{10}{12}\right)^2(\varepsilon_1^T+\fracsm{4}{5}|U(0)-u(0)|).
\end{align*}

We therefore get the bound:
\begin{align*}
\int_{y_{\mathbb{S}^2}}^{u_T(3/5)}\left[\fracsm{3}{10}+\fracsm{33}{20}\left(s-y_{\mathbb{S}^2}\right)^5\right]|F'(s)-f'(s)|ds\leq \fracsm{80}{25}|U(0)-u(0)| + 6\varepsilon_1^T +\fracsm{27}{10}\varepsilon_2^T.
\end{align*}

Again, we will need the sizes of some elementary Gaussian double integrals:
\begin{align*}
&\int_{y_{\mathbb{S}^2}}^{u_T(3/5)}\eta(x_2)\left(1+\int_{x_2}^{u_T(3/5)}(\fracsm{13}{8}-\fracsm{s}{2})\exp\left\{\fracsm{7}{4}-\fracsm{9}{10}\left(s-y_{\mathbb{S}^2}\right)^2+\left(\fracsm{13}{8}-\fracsm{s}{2}\right)\left(u_T(3/5)-s\right)\right\}ds\right)dx_2\leq\fracsm{29}{50}\\
&\int_{y_{\mathbb{S}^2}}^{u_T(3/5)}\int_{x_2}^{u_T(3/5)}\eta(x_2)\exp\left\{\fracsm{7}{4}-\fracsm{9}{10}\left(s-y_{\mathbb{S}^2}\right)^2+\left(\fracsm{13}{8}-\fracsm{s}{2}\right)\left(u_T(3/5)-s\right)\right\}dsdx_2\leq \fracsm{29}{50},\\
&\int_{y_{\mathbb{S}^2}}^{u_T(3/5)}\int_{x_2}^{u_T(3/5)}\eta(x_2)(u_T(3/5)-s)\exp\left\{\fracsm{7}{4}-\fracsm{9}{10}\left(s-y_{\mathbb{S}^2}\right)^2+\left(\fracsm{13}{8}-\fracsm{s}{2}\right)\left(u_T(3/5)-s\right)\right\}dsdx_2\leq\fracsm{19}{50}.
\end{align*}
Thus
\[
\int_{y_{\mathbb{S}^2}}^{u_T(3/5)}\eta(x_2)|F(x_2)-f(x_2)|dx_2\leq \fracsm{1}{2}|U(0)-u(0)|+\fracsm{29}{50}\varepsilon_1^T+\fracsm{19}{50}\varepsilon_2^T.
\]

Assume once again bounds for the $\varepsilon$-geodesics:
\begin{align}
&\int_{x_2}^{u_T(3/5)}|R(s,F,F')|ds\leq -\fracsm{16}{25}x_2^2 + \fracsm{22}{10}x_2 - \fracsm{18}{25},\\
&\int_{x_2}^{u_T(3/5)}\left|S(s,F,F')\right|ds\leq -\fracsm{27}{100}x_2^2+\fracsm{7}{25}x_2 + \fracsm{19}{10}.
\end{align}

By Gr\"{o}nwall-Bellman with the new estimates, we see:
\begin{align*}
\Psi(x_2)\leq &\Bigg[\left(\int_{x_2}^{u_T(3/5)}\left|S(s,F,F')\right|ds\right)|G(u_T(3/5))-g(u_T(3/5))|\\
&\qquad+\int_{x_2}^{u_T(3/5)}\left[\fracsm{3}{10}+\fracsm{33}{20}\left(x_2-y_{\mathbb{S}^2}\right)^5\right]|F'(s)-f'(s)|ds\\
&\qquad+\int_{x_2}^{u_T(3/5)}\eta(x_2)|F(s)-f(s)|ds+\delta^T_2(u_T(3/5)-x_2)+|G'(u_T(3/5))-g'(u_T(3/5))|\Bigg]\times\\
&\quad\exp\left\{\int_{x_2}^{u_T(3/5)}\left|R(s,F,F')\right|ds+(u_T(3/5)-x_2)\int_{x_2}^{u_T(3/5)}\left|S(s,F,F')\right|ds\right\}\\
\leq &24\Big[\fracsm{31}{20}\left(\fracsm{13}{10}|U(0)-u(0)|+\fracsm{3}{5}\varepsilon_1^T+\fracsm{11}{50}\delta_1^T\right) + \fracsm{80}{25}|U(0)-u(0)| + 6\varepsilon_1^T +\fracsm{27}{10}\varepsilon_2^T\\
&\quad +\fracsm{1}{2}|U(0)-u(0)|+\fracsm{29}{50}\varepsilon_1^T+\fracsm{19}{50}\varepsilon_2^T+ (\fracsm{49}{16} - \fracsm{41}{23})\delta^T_2+\frac{\fracsm{23}{5}|U(0)-u(0)|+\fracsm{11}{5}\varepsilon_1^T+\fracsm{51}{25}\delta_1^T}{|u'(\fracsm{3}{5})|}\Big].
\end{align*}

As before, we thus have the estimate:

\begin{align*}
|\Psi(y_{\mathbb{S}^2})|=|G'(y_{\mathbb{S}^2})-g'(y_{\mathbb{S}^2})|\leq 237|U(0)-u(0)|+228\varepsilon_1^T+74\varepsilon_2^T+53\delta_1^T+ 31\delta^T_2,
\end{align*}
where we used $1/|u'(\fracsm{3}{5})|\leq \fracsm{9}{10}$. By integration, we can again accurately estimate the $\|\cdot\|_{C^0}$-norm, although here it suffices to use a simple supremum bound on the integrand:
\begin{align*}
|G(y_{\mathbb{S}^2})-g(y_{\mathbb{S}^2})|&\leq |G(u_T(3/5))-g(u_T(3/5))|+|y_{\mathbb{S}^2}-u_T(3/5)||\Psi(y_{\mathbb{S}^2})|\\
&=\fracsm{13}{10}|U(0)-u(0)|+\fracsm{3}{5}\varepsilon_1^T+\fracsm{11}{50}\delta_1^T+\fracsm{13}{10}\left(237|U(0)-u(0)|+228\varepsilon_1^T+74\varepsilon_2^T+53\delta_1^T+ 31\delta^T_2\right)\\
&\leq310|U(0)-u(0)|+297\varepsilon_1^T+97\varepsilon_2^T+70\delta_1^T+ 41\delta^T_2.
\end{align*}

\underline{Bottom: $x_1$-graph (Jacobi Equation)}

Let us consider the first part of $\Gamma$, graphical over the $x_1$-axis. Here, we have the expressions:
\begin{align*}
&\left|\partial_2P(x_1,\xi,U')\right|=0,\\
&\left|\partial_3P(x_1,u,\xi')\right|=|x_1 \xi'|,\\
&\left|\partial_2Q(x_1,\xi,U')\right|=2\left|\left(\frac{\xi-x_1U'}{2}-\frac{1}{\xi}\right)\left(\frac{1}{2}+\frac{1}{\xi^2}\right)-\frac{1}{\xi^3}\right|,\\
&\left|\partial_3Q(x_1,u,\xi')\right|=\left|x_1\left(\frac{u-x_1\xi'}{2}-\frac{1}{u}\right)-\xi'\right|.
\end{align*}

Assume for a small $\delta_1^B>0$ the uniform bounds:
\begin{align}
&\left|V''+P(x_1,U,U')V'+Q(x_1,U,U')V\right|\leq \delta_1^B,\\
&\left|\partial_3P(x_1,u,\xi')\right||v'|+\left|\partial_3Q(x_1,u,\xi')\right||v|\leq 2,\\
&\left|\partial_2 Q(x_1,\xi,U')\right||v|\leq 41-80x_1.
\end{align}

We let $\Phi(x_1):=|V'(x_1)-v'(x_1)|$ and estimate:
\begin{align*}
\Phi'(x_1)&\leq \left||V'-v'|'\right|\overset{\textrm{a.e}}{=}|V''-v''|\\
&\leq \left|P(x_1,U,U')V' - P(x_1,u,u')v'\right|+\left|Q(x_1,U,U')V- Q(x_1,u,u')v\right|+\delta_1^B\\
&\leq \left|P(x_1,U,U')\right||V' -v'|+\left|P(x_1,U,U') - P(x_1,u,u')\right||v'|\\
&\qquad+\left|Q(x_1,U,U')\right||V -v|+\left|Q(x_1,U,U') - Q(x_1,u,u')\right||v|+\delta_1^B\\
&\leq \left|P(x_1,U,U')\right||V' -v'|+\left|Q(x_1,U,U')\right||V -v|\\
&\qquad +\left(\left|\partial_3P(x_1,u,\xi')\right||v'|+\left|\partial_3Q(x_1,u,\xi')\right||v|\right)|U'-u'|\\
&\qquad +\left|\partial_2 Q(x_1,\xi,U')\right||v||U-u|+\delta_1^B\\
\end{align*}
Hence:
\begin{align*}
&\leq \left|P(x_1,U,U')\right|\Phi(x_1) + \left|Q(x_1,U,U')\right|\int_{0}^{x_1}\Phi(s)ds+\left|Q(x_1,U,U')\right||V(0)-v(0)|\\
&\qquad+2|U'(x_1)-u'(x_1)|+(41-80x_1)|U(x_1)-u(x_1)|+\delta_1^B.
\end{align*}
We integrate on $[0,x_1]$ to get:
\begin{align*}
\Phi(x_1)\leq &\int_0^{x_1}\left[\left|P(s,U,U')\right|+\int_0^{x_1}\left|Q(t,U,U')\right|dt\right]\Phi(s)ds+\left(\int_0^{x_1}\left|Q(s,U,U')\right|ds\right)|V(0)-v(0)|\\
&\quad +2\int_0^{x_1}|U'(s)-u'(s)|ds+\int_0^{x_1}(41-80s)|U(s)-u(s)|ds+\delta^B_1x_1.
\end{align*}

Recall the estimates:
\begin{align*}
2\int_0^{x_1}|U'(s)-u'(s)|ds&\leq2|U(0)-u(0)|\int_0^{x_1}(\fracsm{29}{5}s+\fracsm{1}{20})\exp\Big\{\fracsm{49}{5}s^2+\fracsm{s}{4}\Big\}ds\\
&\qquad+2\varepsilon^B_1\int_0^{x_1}s\exp\Big\{\fracsm{49}{5}s^2+\fracsm{s}{4}\Big\}ds\\
&\leq\fracsm{36}{5}|U(0)-u(0)|+\fracsm{6}{5}\varepsilon^B_1.
\end{align*}
Note also that from the estimates for $|U-u|$ from $|U'-u'|$, we have already once estimated the integral of the latter. We now need the sizes of these elementary Gaussian double integrals:
\begin{align*}
&\int_0^{\fracsm{1}{2}}(41-80x_1)\left(1+\int_0^{x_1}(\fracsm{29}{5}s+\fracsm{1}{20})\exp\Big\{\fracsm{49}{5}s^2+\fracsm{s}{4}\Big\}ds\right)dx_1\leq\frac{67}{5},\\
&\int_0^{\fracsm{1}{2}}\int_0^{x_1}(41-80x_1)s\exp\Big\{\fracsm{49}{5}s^2+\fracsm{s}{4}\Big\}dsdx_1\leq\frac{12}{25}.
\end{align*}
Thus
\[
\int_0^{\fracsm{1}{2}}(41-80x_1)|U(x_1)-u(x_1)|dx_1\leq \fracsm{67}{5}|U(0)-u(0)|+\frac{12}{25}\varepsilon^B_1.
\]

Now, we will furthermore assume the bounds pertaining to the $\varepsilon$-geodesics:
\begin{align}
&\int_0^{x_1}|P(s,U,U')|ds\leq \frac{4}{9}x_1^2,\\
&\int_0^{x_1}\left|Q(s,U,U')\right|ds\leq 4-16(x_1-\fracsm{1}{2})^2.
\end{align}

Again, by Gr\"{o}nwall-Bellman we see (with $V(0)=v(0)$):
\begin{align*}
\Phi(x_1)\leq &\left[2\int_0^{x_1}|U'(s)-u'(s)|ds+\int_0^{x_1}(41-80s)|U(s)-u(s)|ds+\delta^B_1x_1\right]\times\\
&\qquad\exp\left\{\int_0^{x_1}\left|P(s,U,U')\right|ds+x_1\int_0^{x_1}\left|Q(s,U,U')\right|ds\right\}\\
\leq & \left[\fracsm{103}{5}|U(0)-u(0)|+\fracsm{42}{25}\varepsilon^B_1+\delta^B_1x_1\right]\times\\
&\qquad\exp\left\{\fracsm{4}{9}x_1^2+4x_1-16x_1(x_1-\fracsm{1}{2})^2\right\}.
\end{align*}
Thus
\begin{align*}
|V'(\fracsm{1}{2})-v'(\fracsm{1}{2})|\leq 171|U(0)-u(0)|+14\varepsilon_1^B+\fracsm{38}{9}\delta_1^B,\\
|V(\fracsm{1}{2})-v(\fracsm{1}{2})|\leq 31|U(0)-u(0)|+\fracsm{51}{20}\varepsilon_1^B+\fracsm{11}{5}\delta_1^B.
\end{align*}
Here, the last estimate followed by integration and using again $V(0)=v(0)$.

\underline{Bottom: $x_2$-graph to cylinder (Jacobi Equation)}

We consider the next part of $\Gamma$, graphical over the $x_2$-axis over $[a_0,\sqrt{2}]$. Here, we have the expressions:
\begin{align*}
&\left|\partial_2R(x_2,\xi,F')\right|=0,\\
&\left|\partial_3R(x_2,f,\xi')\right|=2\left|\frac{1}{x_2}-\frac{x_2}{2}\right||\xi'|,\\
&\left|\partial_2S(x_2,\xi,F')\right|=\left|\left(\frac{1}{x_2}-\frac{x_2}{2}\right)F'+\frac{\xi}{2}\right|,\\
&\left|\partial_3S(x_2,f,\xi')\right|=\left|\left(\frac{1}{x_2}-\frac{x_2}{2}\right)(x_2\xi'-f)+\frac{4\xi'}{x_2^2}\right|.
\end{align*}

Assume for a small $\delta_2^B>0$ the uniform bounds:
\begin{align}
&\left|G''+R(x_2,F,F')G'+S(x_2,F,F')G\right|\leq \delta_2^B,\\
&\left|\partial_3R(x_2,u,\xi')\right||g'|+\left|\partial_3S(x_2,f,\xi')\right||g|\leq \frac{4}{5}+8\left(x_2-\sqrt{2}\right)^2,\\
&\left|\partial_2 S(x_2,\xi,U')\right||g|\leq \frac{24}{50}-\frac{3}{4}\left(x_2-\sqrt{2}\right)^2.
\end{align}

We let $\Psi(x_2):=|G'(x_2)-g'(x_2)|$ and estimate:
\begin{align*}
\Psi'(x_2)&\leq \left||G'-g'|'\right|\overset{\textrm{a.e}}{=}|G''-g''|\\
&\leq \left|R(x_2,F,F')G' - R(x_2,f,f')g'\right|+\left|S(x_2,F,F')G- S(x_2,f,f')g\right|+\delta_2^B\\
&\leq \left|R(x_2,F,F')\right||G' -g'|+\left|R(x_2,F,F') - R(x_2,f,f')\right||g'|\\
&\qquad+\left|S(x_2,F,F')\right||G -g|+\left|S(x_2,F,F') - S(x_2,f,f')\right||g|+\delta_2^B\\
&\leq \left|R(x_2,F,F')\right||G' - g'|+\left|S(x_2,F,F')\right||G - g|\\
&\qquad +\left(\left|\partial_3R(x_2,f,\xi')\right||g'|+\left|\partial_3S(x_2,f,\xi')\right||g|\right)|F'-f'|\\
&\qquad +\left|\partial_2 S(x_2,\xi,F')\right||g||F-f|+\delta_2^B\\
&\leq \left|R(x_2,F,F')\right|\Psi(x_2) + \left|S(x_2,F,F')\right|\int_{0}^{x_2}\Psi(s)ds+\left|S(x_2,F,F')\right||G(a_0)-g(a_0)|\\
&\qquad+\left[\frac{4}{5}+8\left(x_2-\sqrt{2}\right)^2\right]|F'(x_2)-f'(x_2)|+\left[\frac{24}{50}-\frac{3}{4}\left(x_2-\sqrt{2}\right)^2\right]|F(x_2)-f(x_2)|+\delta_2^B.
\end{align*}
We integrate on $[a_0,x_2]$ to get:
\begin{align*}
\Psi(x_2)\leq &\int_{a_0}^{x_2}\left[\left|R(s,F,F')\right|+\int_{a_0}^{x_2}\left|S(t,F,F')\right|dt\right]\Psi(s)ds+\left(\int_{a_0}^{x_2}\left|S(s,F,F')\right|ds\right)|G(a_0)-g(a_0)|\\
&\quad +\int_{a_0}^{x_2}\left[\fracsm{4}{5}+8(s-\sqrt{2})^2\right]|F'(s)-f'(s)|ds+\int_{a_0}^{x_2}\left[\fracsm{24}{50}-\fracsm{3}{4}(s-\sqrt{2})^2\right]|F(s)-f(s)|ds\\
&\quad+|G'(a_0)-g'(a_0)|+\delta_2^B(x_2-a_0).
\end{align*}
Recall, we have above shown the estimates:
\begin{align*}
&|F'(x_2)-f'(x_2)|\leq \Big[(\fracsm{16}{25}x_2-\fracsm{3}{7})|F(a_0)-f(a_0)|+|F'(a_0)-f'(a_0)|+\varepsilon_2^B(x_2-a_0)\Big]\\
&\qquad\qquad\times\exp\left\{\fracsm{9}{10}-\fracsm{11}{10}\left(x_2-\fracsm{3}{2}\right)^2+\left(x_2-a_0\right)\left(\fracsm{16}{25}x_2-\fracsm{3}{7}\right)\right\},\\
&|F(a_0)-f(a_0)|\leq\fracsm{10}{12}\left(\fracsm{3}{5}\varepsilon^B_1+\fracsm{23}{5}|U(0)-u(0)|\right)\\
&|F'(a_0)-f'(a_0)|\leq\left(\fracsm{10}{12}\right)^2(\fracsm{28}{5}\varepsilon_1^B+39|U(0)-u(0)|).
\end{align*}

We therefore get the bound:
\begin{align*}
\int_{a_0}^{\sqrt{2}}\left[\fracsm{4}{5}+8(s-\sqrt{2})^2\right]|F'(s)-f'(s)|ds\leq 78|U(0)-u(0)| + 12\varepsilon_1^B +\fracsm{9}{10}\varepsilon_2^B.
\end{align*}

Again, we will need the sizes of some elementary Gaussian double integrals:
\begin{align*}
&\int_{a_0}^{\sqrt{2}}\left[\fracsm{24}{50}-\fracsm{3}{4}(x_2-\sqrt{2})^2\right]\left(1+\int_{a_0}^{x_2}(\fracsm{16}{25}s-\fracsm{3}{7})\exp\left\{\fracsm{9}{10}-\fracsm{11}{10}\left(s-\fracsm{3}{2}\right)^2+\left(s-a_0\right)\left(\fracsm{16}{25}s-\fracsm{3}{7}\right)\right\}ds\right)dx_2\\
&\qquad\leq\frac{3}{10},\\
&\int_{a_0}^{\sqrt{2}}\int_{a_0}^{x_2}\left[\fracsm{24}{50}-\fracsm{3}{4}(x_2-\sqrt{2})^2\right]\exp\left\{\fracsm{9}{10}-\fracsm{11}{10}\left(x_2-\fracsm{3}{2}\right)^2+\left(x_2-a_0\right)\left(\fracsm{16}{25}x_2-\fracsm{3}{7}\right)\right\}dsdx_2\leq\fracsm{1}{5},\\
&\int_{a_0}^{\sqrt{2}}\int_{a_0}^{x_2}\left[\fracsm{24}{50}-\fracsm{3}{4}(x_2-\sqrt{2})^2\right](s-a_0)\exp\left\{\fracsm{9}{10}-\fracsm{11}{10}\left(x_2-\fracsm{3}{2}\right)^2+\left(x_2-a_0\right)\left(\fracsm{16}{25}x_2-\fracsm{3}{7}\right)\right\}dsdx_2\leq \frac{7}{125}.
\end{align*}
Thus
\[
\int_{a_0}^{\sqrt{2}}\left[\fracsm{24}{50}-\fracsm{3}{4}(x_2-\sqrt{2})^2\right]|F(x_2)-f(x_2)|dx_2\leq \fracsm{197}{30}|U(0)-u(0)|+\fracsm{47}{50}\varepsilon_1^B+\fracsm{7}{125}\varepsilon_2^B.
\]

Assume again bounds for the $\varepsilon$-geodesics:
\begin{align}
&\int_{a_0}^{x_2}|R(s,F,F')|ds\leq \fracsm{9}{20}-\fracsm{3}{4}(x_2-\sqrt{2})^2,\\
&\int_{a_0}^{x_2}\left|S(s,F,F')\right|ds\leq x_2-\fracsm{2}{5}.
\end{align}

By Gr\"{o}nwall-Bellman with the new estimates, we see:
\begin{align*}
\Psi(x_2)\leq &\Bigg[\left(\int_{a_0}^{x_2}\left|S(s,F,F')\right|ds\right)|G(a_0)-g(a_0)|
+\int_{a_0}^{x_2}\left[\fracsm{4}{5}+8(s-\sqrt{2})^2\right]|F'(s)-f'(s)|ds\\
&\qquad+\int_{a_0}^{x_2}\left[\fracsm{24}{50}-\fracsm{3}{4}(x_2-\sqrt{2})^2\right]|F(s)-f(s)|ds+\delta^B_2(x_2-a_0)+|G'(a_0)-g'(a_0)|\Bigg]\times\\
&\exp\left\{\int_{a_0}^{x_2}\left|R(s,F,F')\right|ds+(x_2-a_0)\int_{a_0}^{x_2}\left|S(s,F,F')\right|ds\right\}\\
\leq & \fracsm{63}{20}\Big[31|U(0)-U(0)|+\fracsm{51}{20}\varepsilon_1^B+\fracsm{11}{5}\delta_1^B + 78|U(0)-u(0)| + 12\varepsilon_1^B +\fracsm{9}{10}\varepsilon_2^B+\fracsm{197}{30}|U(0)-u(0)|\\
&\qquad+\fracsm{47}{50}\varepsilon_1^B+\fracsm{7}{125}\varepsilon_2^B + (\sqrt{2}-a_0)\delta^B_2+\frac{171|U(0)-u(0)|+14\varepsilon_1^B+\fracsm{38}{9}\delta_1^B}{|u'(\fracsm{1}{2})|}\Big]\\ 
\end{align*}
Thus
\begin{align*}
&|G'(\fracsm{1}{2})-g'(\fracsm{1}{2})|\leq 813|U(0)-u(0)|+86\varepsilon_1^B+3\varepsilon_2^B+\fracsm{95}{27}\delta_1^B+ (\sqrt{2}-a_0)\delta^B_2,\\
&|G(\fracsm{1}{2})-g(\fracsm{1}{2})|\leq 522|U(0)-u(0)| + 56\varepsilon_1^B+2\varepsilon_2^B + 11\delta_1^B+ \fracsm{9}{10}\delta^B_2.
\end{align*}

We see that, with matched initial conditions, choosing all approximation constants $\varepsilon_i^{T/B}$ and $\delta_i^{T/B}$ at the order of $10^{-1}$--$10^{-3}$ (depending on each particular coefficient) suffices to conclude the estimates in (\ref{topmatrix})--(\ref{botmatrix}), and hence one may, with such test functions, rigorously verify that locations of the Jacobi functions are indeed accurately shown in Figures 3--6, such that the conclusions in Theorem \ref{TechThm} hold true. Hence the main claim, the existence and properties of the closed self-shrinkers in Theorem \ref{Thm:torusphere} is also verified.

\bibliographystyle{amsalpha}

\end{document}